\documentclass[twoside,reqno]{amsart}

\usepackage[english]{babel} 
\usepackage{exscale}   
\usepackage{amsmath}
\usepackage{amsfonts}
\usepackage{amssymb}
\usepackage{amsxtra}
\usepackage{stmaryrd}
\usepackage{xspace}
\usepackage{epsfig}
\usepackage{mathrsfs}
\usepackage{pstricks}
\usepackage{enumitem}
\usepackage{latexsym}
\usepackage{graphics}
\usepackage{xypic}

\newtheorem{theorem}{Theorem}[section]
\newtheorem{theoremanddefinition}[theorem]{Theorem and Definition}
\newtheorem{proposition}[theorem]{Proposition}

\newtheorem{lemma}[theorem]{Lemma}
\newtheorem{definition}[theorem]{Definition}
\newtheorem{example}[theorem]{Example}
\newtheorem{remark}[theorem]{Remark}

\newcommand{\Bset}{\mathbb{B}}
\newcommand{\Cset}{\mathbb{C}}

\newcommand{\Nset}{\mathbb{N}}

\newcommand{\Sset}{\mathbb{S}}

\newcommand{\Zset}{\mathbb{Z}} 
%%%%%%%%%%%%%%%%%%%%%%%%%%%%%%%%%%%%%%%%%%%%%%%%%%%%%%%%%%%%%%%%%%%%%%%
%%%%%%%%%%%%% Newcommands for caligraphical letters          %%%%%%%%%
%%%%%%%%%%%%%%%%%%%%%%%%%%%%%%%%%%%%%%%%%%%%%%%%%%%%%%%%%%%%%%%%%%%%%%%
\newcommand{\CA}{\ensuremath{{\mathcal A}}\xspace}         % Caligr-A
\newcommand{\CB}{\ensuremath{{\mathcal B}}\xspace}         % Caligr-B
\newcommand{\CC}{\ensuremath{{\mathcal C}}\xspace}         % Caligr-C
         % Caligr-D
         % Caligr-E
         % Caligr-F
         % Caligr-G
         % Caligr-H
         % Caligr-K
         % Caligr-I
         % Caligr-J
         % Caligr-L
\newcommand{\CM}{\ensuremath{{\mathcal M}}\xspace}         % Caligr-M
\newcommand{\CN}{\ensuremath{{\mathcal N}}\xspace}         % Caligr-N
\newcommand{\eins}{\ensuremath{{\rm 1\kern-.25em l}}\xspace}  % identity
 
                     % Ad 
  % Exp
                              % i
\newcommand{\id}{\operatorname{id}}                        % id 

                        % Im
    % Ln
 % Ln_{0}
  %Permutation
                        % Re
           % shift
           % spectrum
           % totalwidth
           % totalwidth with hat
\newcommand{\trace}{\operatorname{tr}}        % normalized trace
 % sesquilinear form

%%%%%%%%%%%%%%%%%%%%%%%%%%%%%%%%%%%%%%%%%%%%%%%%%%%%%%%%%%%%%%%%%%%%%%%
%%%%%%%%%%%%% Abbreviations of sets (of operators)              %%%%%%%
%%%%%%%%%%%%%%%%%%%%%%%%%%%%%%%%%%%%%%%%%%%%%%%%%%%%%%%%%%%%%%%%%%%%%%%
  % Index
  % Aut
  % Domain
  % End
%\newcommand{\Int}[1]{\operatorname{Int}#1}         % Interior of a set

  % Mor
%set
  % span
%\newcommand{\bset}[2]{\mathopen{\bigl\{}#1\mathop{\big|}#2%
%           \mathclose{\bigr\}}} % set
%%%%%%%%%%%%%%%%%%%%%%%%%%%%%%%%%%%%%%%%%%%%%%%%%%%%%%%%%%%%%%%%%%%%%%%
%%%   Special commands                                             %%%%
%%%%%%%%%%%%%%%%%%%%%%%%%%%%%%%%%%%%%%%%%%%%%%%%%%%%%%%%%%%%%%%%%%%%%%%
            % Symbol for l-spaces

%%%%%%%%%%%%%%%%%%%%%%%%%%%%%%%%%%%%%%%%%%%%%%%%%%%%%%%%%%%%%%%%%%%%%%%
%%%%%%%%%%%%% Abbreviations for topologies and limits         %%%%%%%%%
%%%%%%%%%%%%%%%%%%%%%%%%%%%%%%%%%%%%%%%%%%%%%%%%%%%%%%%%%%%%%%%%%%%%%%%
   % strong-top
   % weak-top
%\newcommand{\nmolim}[1]{\nmot\text{\rm -}\lim_{#1}}          % nmo-lim
           % strong-top
           % weak-top

%%%%%%%%%%%%%%%%%%%%%%%%%%%%%%%%%%%%%%%%%%%%%%%%%%%%%%%%%%%%%%%%%%%%%%%%%
%%%%% Declaration of partial diagram boxes for figure environment %%%%%%%
%%%%%%%%%%%%%%%%%%%%%%%%%%%%%%%%%%%%%%%%%%%%%%%%%%%%%%%%%%%%%%%%%%%%%%%%%
%%%%%%%%%%%%%%%    B E G I N  %%%%%%%%%%%%%%%%%%%%%%%%%%%%%%%%%%%%%%%%%%% 
%%%%%%%%%%%%%%%%%%%%%%%%%%%%%%%%%%%%%%%%%%%%%%%%%%%%%%%%%%%%%%%%%%%%%%%%%
\newsavebox{\artin}% begin declaration sigma (20 by 20 grid)
\newcommand{\masterartin}{
\linethickness{1pt}
\qbezier(10,20)(0,10)(-10,0)
\qbezier(-10,20)(-7,17)(-4,14)
\qbezier(10,0)(6,4)(4,6)
}
%%%%%%%%%%%%%%%%%%%%%%%%%%%%%%%%%%%%%%%%%%%%%%%%%%%%%%%%%%%%%%%%%%%%%%%%%%
\newsavebox{\artininv}% begin declaration sigma^-1 (20 by 20 grid)
\newcommand{\masterartininv}{
\linethickness{1pt}
\qbezier(-10,20)(0,10)(10,0)
\qbezier(10,20)(6,16)(4,14) 
\qbezier(-10,0)(-5,5)(-4,6)
}
%%%%%%%%%%%%%%%%%%%%%%%%%%%%%%%%%%%%%%%%%%%%%%%%%%%%%%%%%%%%%%%%%%%%%%%%%%
\newsavebox{\strandr}% begin declaration (20 by 20 grid)
\newcommand{\masterstrandr}{
\linethickness{1pt}
\qbezier(10,20)(10,10)(10,0)
}
%%%%%%%%%%%%%%%%%%%%%%%%%%%%%%%%%%%%%%%%%%%%%%%%%%%%%%%%%%%%%%%%%%%%%%%%%%
\newsavebox{\strandl}% begin declaration (20 by 20 grid)
\newcommand{\masterstrandl}{
\linethickness{1pt}
\qbezier(-10,20)(-10,10)(-10,0)
}
%%%%%%%%%%%%%%%%%%%%%%%%%%%%%%%%%%%%%%%%%%%%%%%%%%%%%%%%%%%%%%%%%%%%%%%%%%
\newsavebox{\horizontaldots}% begin declaration (20 by 20 grid)
\newcommand{\masterhorizontaldots}{
\linethickness{1pt}
\put(5,0){\circle*{2}}
\put(11,0){\circle*{2}}
\put(17,0){\circle*{2}}
}
%%%%%%%%%%%%%%%%%%%%%%%%%%%%%%%%%%%%%%%%%%%%%%%%%%%%%%%%%%%%%%%%%%%%%%%%%%
%%%%%%%%%%%%%%%%%%%%%%%%%%%%%%%%%%%%%%%%%%%%%%%%%%%%%%%%%%%%%%%%%%%%%%%%%%
%%%%% Declaration of partial diagram boxes for figure environment %%%%%%%%
%%%%%%%%%%%%%%%%%%%%%%%%%%%%%%%%%%%%%%%%%%%%%%%%%%%%%%%%%%%%%%%%%%%%%%%%%%
%%%%%%%%%%%%%%%    E N D      %%%%%%%%%%%%%%%%%%%%%%%%%%%%%%%%%%%%%%%%%%%% 
%%%%%%%%%%%%%%%%%%%%%%%%%%%%%%%%%%%%%%%%%%%%%%%%%%%%%%%%%%%%%%%%%%%%%%%%%%

\begin{document}

\title{Semi-Cosimplicial Objects and Spreadability}
\author{D.~Gwion Evans}
\address{D.~Gwion Evans \\
Department of Mathematics \\ 
Aberystwyth University \\
Aberystwyth, SY23 3BZ, UK}
\email{dfe@aber.ac.uk}
\author{Rolf Gohm}
\address{Rolf Gohm \\
Department of Mathematics \\ 
Aberystwyth University \\
Aberystwyth, SY23 3BZ, UK}
\email{rog@aber.ac.uk}
\author{Claus K\"{o}stler}
\address{Claus K\"{o}stler\\
School of of Mathematical Sciences \\ 
University College Cork\\
Cork, Ireland}
\email{Claus@ucc.ie}
\subjclass[2000]{}
\keywords{}
\date{26 February 2016}
\begin{abstract}
To a semi-cosimplicial object (SCO) in a category we associate a system of partial shifts on the inductive limit. We show how to produce an SCO from an action of the infinite braid monoid $\Bset^+_\infty$ and provide examples. In categories of (noncommutative) probability spaces SCOs correspond to spreadable sequences of random variables, hence SCOs can be considered as the algebraic structure underlying spreadability.
\end{abstract}
\maketitle

\section{Introduction}
\label{section:intro}

Distributional symmetries have been intensely studied in probability theory in recent decades, see the monograph \cite{Ka05} for an inspiring overview. More recently it emerged that distributional symmetries are also crucial for the further development of noncommutative probability theory and that an important role is played by a specific distributional symmetry (or invariance principle) which is called spreadability, i.e., the invariance of distribution if one passes from a sequence of random variables to a subsequence. See \cite{Ko10} for the beginning of this story. 
One can argue that these symmetries become more transparent from an algebraic point of view if we interpret probability theory as a study of associative algebras and their states and so the point of view of noncommutative probability theory is a natural one.  
In this paper we deepen these connections to algebra by including concepts from category theory and homological algebra. Not only do we obtain a better idea of what spreadability really means and arrive at the natural level of generality for constructing further examples. We also arrive at the fundamental insight that unlike other probabilistic symmetries which are based on group actions spreadability really has a homological flavour. To make this completely explicit is one of the main targets of this paper.

What we need to study for this purpose are semi-cosimplicial objects (SCOs for short). We briefly recollect the relevant concepts, see for example \cite{We94}, Chapter 8.1, for more details. Some of the most fundamental ideas of algebraic topology and homological algebra relate to simplices and they can be based on the simplicial category $\Delta$. The objects of $\Delta$ are finite ordered sets, usually written as $[n] := \{0,1,\ldots,n\},\; n \in \Nset_0$, and the morphisms are all non-decreasing maps between these objects. An interesting subcategory $\Delta_S$, called the semi-simplicial category, is obtained by considering the same objects but only (strictly) increasing maps as morphisms. 
Other names in use for this important category $\Delta_S$ are `restricted simplicial' \cite{BS14} and `incomplete simplicial' \cite{Mi03} (see \cite[8.1.10]{We94} for historical remarks about the terms).
In this paper only the semi-simplicial category $\Delta_S$ is relevant and so we give further definitions only in this context. Let us remark however that it is always interesting to ask if constructions actually can be extended to the simplicial category $\Delta$ in some way.

A covariant functor $F$ from the semi-simplicial category $\Delta_S$ to another category $\CC$ is called a semi-cosimplicial object (SCO) in $\CC$. We can work out a more explicit description of what a SCO is by noting that the morphisms of $\Delta_S$ are generated by the face maps
\[
\delta^k \colon [n-1] \rightarrow [n],\quad
m \mapsto m \; \text{if}\;\; m<k,\; 
m \mapsto m+1 \; \text{if}\;\; m \ge k.
\]

Here $k=0,\ldots,n$ and $n \in \Nset$. Following the usual convention we omit the index $n$ in the notation
of the $\delta^k$ and leave the domain and codomain to the context. The $\delta^k$ satisfy the cosimplicial identities
\[
\delta^j \delta^i = \delta^i \delta^{j-1} \quad \text{if}\,\;i < j
\]
and these cosimplicial identities provide a presentation of the category $\Delta_S$. The functor $F$ takes $[n]$ to $F[n]$ and $\delta^k$ to $F(\delta^k): F[n-1] \rightarrow F[n]$. Simplifying the notation we can then write
$F^n$ for $F[n]$ and $\delta^k$ for $F(\delta^k)$ and obtain the explicit definition of an SCO to be used in the sequel.

\begin{definition} \label{def:cosimp}
A semi-cosimplicial object (SCO) in the category $\CC$ is a sequence $(F^n)_{n\in \Nset_0}$ of objects in $\CC$ together with morphisms (coface operators)
\[
\delta^k: F^{n-1} \rightarrow F^n \quad\quad (k=0,\ldots,n)
\]
satisfying the cosimplicial identities
\[
\delta^j \delta^i = \delta^i \delta^{j-1} \quad \text{if}\,\;i < j\,.
\]
If there is an additional object $F^{-1}$ in $\CC$ together with a morphism $\delta^0: F^{-1} \rightarrow F^0$ satisfying the cosimplicial identities then we have an augmented semi-cosimplicial object.
\end{definition}

We refer to, for example, \cite{We94, Sm01, Wu10} for more information about (semi co-)simplicial objects and for a development of the rich and far developed theory built around them. 
\\

In this paper we proceed as follows.
In Section \ref{section:adapted} we develop some category theory which provides a general framework. In particular we show that by forming an inductive limit from a given SCO we get in addition a sequence of adapted endomorphisms with properties reflecting the SCO. We call this an SCO-system of partial shifts and we study some of its properties. The most basic example of an SCO-system of partial shifts (providing a good guide for the intuition) appears on the set $\Nset_0$ of nonnegative integers and it consists of the sequence of maps $\alpha_k \colon \Nset_0 \rightarrow \Nset_0$ (with $k \in \Nset_0$) given by $\alpha_k (m) := m$ if $m < k$ and $\alpha_k (m) := m+1$ if $m \ge k$ (missing the position $k$). Note the close similarity to the face maps described above. 
We should emphasize that SCO-systems of partial shifts are nothing but a convenient tool to handle SCOs, in particular they are a useful bridge to the probabilistic contexts studied later, but with the techniques of Section \ref{section:adapted} it is possible to give formulations directly in terms of the SCO if this is preferred. We also include in Section \ref{section:adapted} some examples of semi-cosimplicial groups which can be constructed in an elementary way.

In Section \ref{section:braid} we study a way to construct SCOs from actions of the infinite braid monoid $\Bset^+_\infty$. This generalizes the idea of braidability in \cite{GK09} (which is discussed briefly later in this introduction) and gives a somewhat simplified way of thinking about this concept. To our knowledge this is also a new way to create cosimplicial identities as needed for an SCO and it creates a wealth of nontrivial examples worth of further study. For instance there is the corresponding standard semi-cosimplicial cohomology theory which we mention briefly in Remark \ref{rem:cohom} but do not investigate further in this paper. 

In Section 4 we first recall the definition of spreadability in various categories of (noncommutative) probability spaces. Then we can state Theorem \ref{thm:spread} which (together with Theorem \ref{thm:spread*}) is our main result. It states that SCOs in these categories induce spreadable sequences of random variables and, conversely, the distribution of a spreadable sequence can always be achieved from an SCO in such a category. 
We only develop the most basic part of the theory here but it should already be enough to convince the reader that SCOs are the fundamental algebraic structure underlying spreadability. 

To guide the reader's intuition through the paper let us insert already here an example in the category of unital associative algebras (or $*$-algebras) which is fundamental in many ways. Let $\CB$ be such an algebra. Then we can form an SCO $(X^n)_{n\in\Nset_0}$ with tensor products $X^n := \bigotimes^n_0 \CB$ together with coface operators
\[
\delta^k: \bigotimes^{n-1}_0 \CB \rightarrow \bigotimes^n_0 \CB, \quad x_0 \otimes \ldots \otimes x_{n-1} 
\quad\mapsto\quad x_0 \otimes \ldots \otimes x_{k-1} \otimes \eins \otimes x_k \otimes \ldots \otimes x_{n-1}.
\]

The cosimplicial identities can easily be checked directly in this case. Alternatively the reader who has studied 
Section \ref{section:braid} is invited to work out that this is a special case of the theory presented in 
Theorem \ref{thm:braid}. In fact it comes from the braid group representation factoring through the representation of the symmetric group which permutes the tensor products.

We can also illustrate the theory of SCO-systems of partial shifts from Section \ref{section:adapted} with this example. There is an inductive system of tensor products of the algebra $\CB$ with itself, with inclusions $x \mapsto x \otimes \eins$ and inductive limit $\CA := \bigotimes^\infty_0 \CB$. Hence on $\CA$ we have the tensor shift $\alpha_0: \CA \rightarrow \CA$ and we get the canonically associated SCO-system of partial shifts by considering the sequence of algebra homomorphisms $\alpha_k \colon \CA \rightarrow \CA$ (with $k \in \Nset_0$) given by
$\alpha_k (x) := x$ if $x \in \bigotimes^{k-1}_0 \CB$ but $\alpha_k (x) := \alpha_0(x)$ if $x \in \bigotimes^\infty_k \CB$. 

By additionally choosing a unital linear functional invariant under all these partial shifts we finally arrive in our example at the theory of spreadability in (noncommutative) probability spaces. The basic example is to choose any unital linear functional $\varphi_\CB$ on $\CB$ and then to construct the infinite tensor product $\varphi := \bigotimes^\infty_0 \varphi_\CB$. Convex combinations of such products provide more examples. 
The reader familiar with the notion of spreadability (which we review in Section \ref{section:spread}) will have no difficulty to verify that under these circumstances the embeddings of the 
noncommutative probability space $(\CB, \varphi_\CB)$ into the different positions of the tensor product provide an example of a spreadable sequence.  

To make some finer distinctions we decided in this paper to refer to the version of spreadability based on $*$-algebras and $*$-homomorphisms, most relevant for the probabilistic point of view, as $*$-spreadable.
We provide the interesting Example \ref{example:TL} where, in contrast to the tensor product example given above, no simplification based on the more traditional idea of exchangeability is possible and the full strength of the results in Section \ref{section:braid} is needed. Namely we construct spreadable sequences of operators and, in particular, of projections in the tower associated to a subfactor, in the theory of von Neumann algebras (see \cite{GHJ89}). Spreadability follows for all values of the Jones index but $*$-spreadability only appears if the index is small (i.e., $\le 4$).

Applications of spreadability are not the topic of this paper but let us finish this introduction with a short review of the literature, including some of our motivations and some background why in particular the study of $*$-spreadability is important and relevant for a probabilist.
In fact, in the $*$-algebra setting, where we use states instead of unital linear functionals, the tensor product example above is the basis of what probabilists call exchangeability, which is an important special case of spreadability. It is intimately connected to the representation of the symmetric group mentioned above. 

Clearly the category of $*$-probability spaces includes classical probability in the sense that we can consider a commutative $*$-algebra of complex functions on a classical probability space and a positive functional induced by a probability measure. In this case spreadable and $*$-spreadable is the same property. To avoid technical difficulties in the following discussion let us always assume that we have 
Lebesgue spaces, so we can, for example, represent homomorphisms of the measure algebras by point transformations (modulo sets of measure zero). See \cite{Pet83}, 1.4C for more details and further references.

Here the interest in spreadability comes from the fact that a de Finetti type theorem can be proved, i.e., we can deduce a form of conditional independence. Using the background and terminology provided in \cite{Ka05} we have 

\begin{theorem} \label{thm:classical}
Let $(\xi_n)_{n\in\Nset_0}$ be a sequence of (classical) random variables (realised by measure-preserving maps between Lebesgue spaces).
Further let $\Sigma_\infty$ denote the $\sigma$-algebra generated by the sequence 
$(\xi_n)_{n\in\Nset_0}$ and $\Sigma_n$ the $\sigma$-algebra generated by $\xi_0, \ldots, \xi_n$, for all $n \in \Nset_0$.
Then the following are equivalent:
\begin{itemize}
\item[(a)]
$(\xi_n)$ is spreadable.
\item[(b)]
$(\xi_n)$ is exchangeable.
\item[(c)]
$(\xi_n)$ is conditionally i.i.d. (independent and identically distributed).
\item[(d)]
For all $n \in \Nset$ there exist 
$\Sigma_n\!-\!\Sigma_{n-1}$-measurable and measure-preserving maps $\delta_k$,
for $k=0,\ldots,n$, such that (with $0 \le N < n$) we have $\xi_N = \xi_N \circ \delta_k$ for $N < k$ and $\xi_{N+1} = \xi_N \circ \delta_k$ for $N \ge k$.
\item[(e)]
There exist $\Sigma_\infty$-measurable measure-preserving maps 
$(\beta_k)_{k \ge 0}$ such that we have $\xi_N = \xi_N \circ \beta_k$ for $N < k$ and $\xi_{N+1} = \xi_N \circ \beta_k$ for $N \ge k$.
\end{itemize}
\end{theorem}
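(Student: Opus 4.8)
The plan is to split the cycle of equivalences into a classical block and the part genuinely tied to the SCO framework. The conditions (a)--(c) are the standard probabilistic trinity and I would simply quote them from \cite{Ka05}: (b)$\Rightarrow$(a) is immediate, since the finite-dimensional law of any subsequence can be matched to that of an initial segment by a finite permutation; (a)$\Rightarrow$(b) is the Ryll-Nardzewski theorem for infinite spreadable sequences; and (b)$\Leftrightarrow$(c) is the spreadable/exchangeable form of de Finetti's theorem. The work specific to this paper lies entirely in conditions (d) and (e), whose maps are precisely the point realizations of the coface operators of an SCO and of its associated SCO-system of partial shifts, so the remaining task is to bolt this block onto the classical one.

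For (a)$\Rightarrow$(e) I would work on sequence space. Let $\mu$ be the law of $(\xi_n)_{n\in\Nset_0}$ on $\Rset^{\Nset_0}$ and, for each $k$, let $s_k\colon \Rset^{\Nset_0}\to\Rset^{\Nset_0}$ be the ``omit coordinate $k$'' map, $s_k(x)_N = x_N$ for $N<k$ and $s_k(x)_N = x_{N+1}$ for $N\ge k$. Spreadability says exactly that $s_k$ pushes $\mu$ forward to $\mu$, i.e.\ that $s_k$ is measure preserving on $(\Rset^{\Nset_0},\mu)$. Under the standing Lebesgue-space hypothesis the coordinate map $\omega\mapsto(\xi_n(\omega))_n$ is, mod null, a measure isomorphism of the space carrying $\Sigma_\infty$ with $(\Rset^{\Nset_0},\mu)$, and transporting $s_k$ through it produces $\Sigma_\infty$-measurable measure-preserving maps $\beta_k$ with $\xi_N\circ\beta_k=\xi_N$ for $N<k$ and $\xi_N\circ\beta_k=\xi_{N+1}$ for $N\ge k$, which is (e). Since the $s_k$ obey the cosimplicial identities on $\Rset^{\Nset_0}$, so do the $\beta_k$, so this is indeed an SCO-system of partial shifts realizing the sequence.

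The harder step is producing the finite-level maps of (d), whose defining feature is the adaptedness requirement that $\delta_k$ be $\Sigma_n$--$\Sigma_{n-1}$-measurable; the global maps $\beta_k$ of (e) carry no such finiteness and cannot simply be truncated. Here I would pass through (c) and use disintegration: on a Lebesgue space the regular conditional distribution of $\xi_n$ given $\Sigma_{n-1}$ exists, and the conditional i.i.d.\ structure lets one build, level by level, a measure-preserving coupling of $(\xi_0,\ldots,\xi_{n-1})$ with $(\xi_0,\ldots,\xi_{k-1},\xi_{k+1},\ldots,\xi_n)$ that is $\Sigma_n$-measurable and lands in $\Sigma_{n-1}$. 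Checking that this coupling can be chosen measure preserving and with exactly the prescribed action on each $\xi_N$ is the main obstacle, and it is precisely where the Lebesgue-space machinery of \cite{Pet83} and \cite{Ka05} is needed.

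Finally, to close the cycle I would prove (d)$\Rightarrow$(a) and (e)$\Rightarrow$(a) together by a finite-marginal argument. Spreadability is a statement about the joint law of finitely many $\xi_{n_0},\ldots,\xi_{n_m}$, and any such selection is obtained from an initial segment by omitting finitely many coordinates, hence by composing finitely many of the maps $\delta_k$ (respectively $\beta_k$); since each is measure preserving and acts on the $\xi_N$ exactly as an omission, the relevant finite-dimensional distributions are left unchanged, and spreadability follows. Together with the classical block quoted from \cite{Ka05} this yields all the stated equivalences.
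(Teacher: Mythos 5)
Your handling of (a)$\Leftrightarrow$(b)$\Leftrightarrow$(c) by citation to \cite{Ka05}, of (a)$\Rightarrow$(e) via the omit-coordinate maps $s_k$ on sequence space transported through the coordinate isomorphism of Lebesgue spaces, and of (d),(e)$\Rightarrow$(a) by composing omissions on finite marginals, all agree with the paper (which routes the last step through Theorem \ref{thm:spread}(2), but with the same content).

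The genuine gap is in your construction of the finite-level maps $\delta_k$ of (d). You route this through (c) and propose to build a ``measure-preserving coupling'' by disintegration, and you yourself flag the verification that the coupling has exactly the prescribed action on each $\xi_N$ as an unresolved ``main obstacle.'' That detour is both incomplete and unnecessary: conditional i.i.d.\ is the deep de Finetti part of the theorem, whereas (d) is meant to be an elementary reformulation of (a). The point you are missing is that $\delta_k$ is already completely determined at the level of measure algebras by the required intertwining: the pullback $f\mapsto f\circ\delta_k$ must send $f(\xi_0,\ldots,\xi_{n-1})$ to $f(\xi_0,\ldots,\xi_{k-1},\xi_{k+1},\ldots,\xi_n)$, so there is nothing to construct and no freedom to exploit. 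Spreadability says precisely that the tuples $(\xi_0,\ldots,\xi_{n-1})$ and $(\xi_0,\ldots,\xi_{k-1},\xi_{k+1},\ldots,\xi_n)$ have the same joint distribution, which is exactly what is needed for this assignment to be a well-defined, measure-preserving embedding of the measure algebra of $\Sigma_{n-1}$ into that of $\Sigma_n$; the Lebesgue-space hypothesis (\cite{Pet83}, 1.4C) then realizes this homomorphism, mod null sets, by a $\Sigma_n$--$\Sigma_{n-1}$-measurable measure-preserving point map $\delta_k$. No regular conditional distributions and no couplings enter; this is the same mechanism that makes your (a)$\Rightarrow$(e) work, of which (d) is just the finite, adapted version (Theorem \ref{thm:correspondence} is what lets one pass between the two). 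Two minor further points: as stated, (d) and (e) only demand the intertwining relations and measure preservation, so your aside about the $\beta_k$ inheriting cosimplicial identities is not needed; and if you do want to exhibit the SCO, note that the point maps $s_k$ compose contravariantly, so they satisfy the simplicial identity $s_i s_j=s_{j-1}s_i$ for $i<j$, and it is their pullbacks on functions that satisfy the cosimplicial identity of Definition \ref{def:cosimp}.
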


We do not discuss exchangeability further in this paper and recommend \cite{GK09, Ko10, GK12} for further results about exchangeability from our point of view. 
The equivalence of (a), (b) and (c) is provided by Theorem 1.1 in \cite{Ka05} where not only a proof but a lot of further information can be found (see also \cite{Ka88}). In fact, among other things it is shown there how from spreadability one can obtain a very transparent proof of the classical de Finetti theorem (which is the equivalence with (c)) via the mean ergodic theorem.
The equivalence of (a) and (d) is exactly the topic of this paper, applied to this specific situation. In fact, it is a special case of the equivalence of (2)(a) and (2)(c) in our Theorem \ref{thm:spread} (or  Theorem \ref{thm:spread*}) below. 
%The faithfulness of the state required for this argument can always be achieved in this case, by restriction to the support of the underlying probability measure, if necessary.
More explicitly, it follows from spreadability that omitting $\xi_k$ from
$\xi_0, \ldots, \xi_n$ yields the same distribution as does $\xi_0, \ldots, \xi_{n-1}$ and this allows us to define the measure preserving transformation $\delta_k$. Conversely, if we start from such $\delta_k$, then by $\delta^k p := p \circ \delta_k$ on polynomials
p in the random variables $\xi_0, \ldots, \xi_{n-1}$ we obtain an SCO
as in Theorem \ref{thm:spread} (2)(c) and deduce spreadability from that as described there.
Note that in (d) we have used again the convention that the dependence on $n$ of the maps $\delta_k$ is suppressed in notation. Equivalently, by the theory we develop in Section \ref{section:adapted}, we have the formulation in (e) involving the measure-preserving maps version of what we call partial shifts.
\\

Let us go back to the general noncommutative setting.
The case of $*$-spreadability for noncommutative random variables in von Neumann algebras arising from actions of the infinite braid group 
$\Bset_\infty$ by state-preserving $*$-automorphisms was investigated extensively in \cite{GK09} and the sequences obtained in this way were called braidable there (we call them $*$-braidable here). The fact that $*$-braidable sequences are $*$-spreadable, obtained in \cite{GK09}, follows again from our Theorem \ref{thm:braid} together with Theorem \ref{thm:spread*}. The converse question: "Is every $*$-spreadable sequence necessarily 
also $*$-braidable?", seems to be still open at the moment. In fact, this open question was one of the motivations for this paper. Our expectation is that the characterization of $*$-spreadability in terms of SCOs achieved in Theorem \ref{thm:spread*} will provide a tool to construct examples which show that the answer is negative. 

It was shown in \cite{Ko10} that in the setting of (in general noncommutative) von Neumann algebras and corresponding noncommutative probability spaces there is still a version of de Finetti's theorem for $*$-spreadable sequences which makes use of a generalized notion of noncommutative stochastic independence. The proof involves refined applications of the mean ergodic theorem and it is in this context that the idea of partial shifts first appeared (which we take up in Section \ref{section:adapted} and derive it from SCOs). 
Moreover the braidability results in \cite{GK09} show that in the noncommutative setting $*$-spreadability is much more general than $*$-exchangeability
(which involves representations of the infinite symmetric group while representations of the infinite braid group are sufficient to produce spreadability, as explained above). Hence there are many indications that in noncommutative probability theory the notion of spreadability is actually more fundamental than exchangeability.

We finally mention the notion of quantum spreadability developed in \cite{Cu11} which strengthens the notion of spreadability using the idea of quantum increasing sequence spaces. It is shown in \cite{Cu11} that quantum spreadability is equivalent with free independence, hence is strong enough to enforce a very specific structure for the noncommutative probability space. In contrast, in this paper we consider the (weaker) classical notion of spreadability but, in general, we apply it to noncommutative probability spaces as well. Here we find that this does not enforce a specific structure for the noncommutative probability space but instead yields interesting general results (for example the de Finetti type results) for a wide range of such spaces and as such is worth of further study. 

We expect that the clear identification of SCOs as the algebraic backbone of spreadability obtained here will lead to the construction of further examples and to new theoretical developments.

\section{Adaptedness. SCOs and Partial Shifts}
\label{section:adapted}
We start by giving a definition of adaptedness in terms of category theory and derive a global formulation for SCOs by so-called SCO-systems of partial shifts. 
Later this helps us to describe the connection between the (co)simplicial theory and probability theory in a flexible and convenient way. With slight modifications we follow here the approach in \cite{Go04}, Section 3.2. 
For category theory itself we follow \cite{ML98}. Consider the category
$\omega = \{0 \rightarrow 1 \rightarrow 2 \rightarrow 3 \rightarrow \ldots\}$
and another category $\CC$ which allows $\omega-$colimits (= inductive limits). We briefly recall what this means.
Suppose that $F \colon \omega \rightarrow \CC$ is a functor, i.e., we have
\[
F_0 \stackrel{i_1}{\rightarrow} F_1 \stackrel{i_2}{\rightarrow} F_2 \stackrel{i_3}{\rightarrow} \ldots
\]
with morphisms $i_n: F_{n-1} \rightarrow F_n,\;n \in \Nset,$ between $\CC$-objects. We also refer to this functor as a filtration (by a slight abuse of language the sequence of objects is also called a filtration).
An $\omega-$colimit (or inductive
limit) is an object $F_\infty = \lim\limits_{\rightarrow} F$ in $\CC$ which together with canonical arrows 
$\mu_n: F_n \rightarrow F_\infty$ ($n\geq 0$)
forms a universal cone (see \cite{ML98}, III.3). Pictorially this is a commuting diagram:
\[
\xymatrix@R30pt@C=70pt{
F_0 \ar[r]^{i_1} \ar[dr]^{\mu_0} \ar[ddr]_{\mu^\prime_0}
& F_1 \ar[r]^{i_2} \ar[d]_{\mu_1}
& F_2 \ar[r]^{i_3} \ar[dl]_{\mu_2} \ar[ddl]^{\mu^\prime_2}
& \ldots \\
 & F_\infty \ar@{->}[d]^{\exists ! f} & & \\
 & F^\prime 
}
\]
($\mu^\prime_1$ not drawn).
So the $\CC$-object $F_\infty$ is determined up to isomorphism by the fact that there are morphisms
$\mu_n: F_n \rightarrow F_\infty,\;n \in \Nset_0,$ which satisfy the equations $\mu_n\, i_n = \mu_{n-1},\;n \in \Nset,$
and are universal with respect to any morphisms
$\mu^\prime_n: F_n \rightarrow F^\prime,\;n \in \Nset_0,$ which satisfy the equations $\mu^\prime_n\, i_n = \mu^\prime_{n-1},\;n \in \Nset$. In many examples these morphisms involve inclusions of sets but this is not necessarily the case in general.

\begin{lemma} \label{lem:adapted}
Given morphisms $\alpha^{(n)}: F_{n-1} \rightarrow F_n,\;n \in \Nset$, such that
\[
\mu_{n+1}\, \alpha^{(n+1)}\, i_n = \mu_n \,\alpha^{(n)} \quad (\text{for all}\;\, n \in \Nset)
\]
then there exists a unique morphism $\alpha: F_\infty \rightarrow F_\infty$ such that
\[
\alpha \, \mu_{n-1} = \mu_n \,\alpha^{(n)} \quad (\text{for all}\;\, n \in \Nset).
\]
\end{lemma}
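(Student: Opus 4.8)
The plan is to exploit the universal property of the inductive limit $F_\infty$ directly. The key observation is that the stated hypothesis
\[
\mu_{n+1}\, \alpha^{(n+1)}\, i_n = \mu_n \,\alpha^{(n)}
\]
is, after a shift of indices, precisely the compatibility condition needed to assemble the family $(\mu_n \alpha^{(n)})_n$ into a cone over the filtration $F$ with vertex $F_\infty$. Once this cone is identified, both the existence and the uniqueness of $\alpha$ will drop out of universality for free.

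Concretely, I would first define, for each $m \in \Nset_0$, the morphism $\nu_m := \mu_{m+1}\, \alpha^{(m+1)} \colon F_m \rightarrow F_\infty$. To see that $(\nu_m)_{m \in \Nset_0}$ is a cone from the diagram $F$ to $F_\infty$, I need the equations $\nu_m\, i_m = \nu_{m-1}$ for all $m \in \Nset$. But $\nu_m\, i_m = \mu_{m+1}\, \alpha^{(m+1)}\, i_m$, and the hypothesis taken at $n=m$ gives exactly $\mu_{m+1}\, \alpha^{(m+1)}\, i_m = \mu_m\, \alpha^{(m)} = \nu_{m-1}$. Hence the cone condition holds verbatim.

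I would then invoke the universal property of $F_\infty$ as recalled just before the lemma: since $(\nu_m)_m$ is a cone over $F$ landing in $F_\infty$, there exists a unique morphism $\alpha \colon F_\infty \rightarrow F_\infty$ with $\alpha\, \mu_m = \nu_m$ for all $m \in \Nset_0$. Unwinding the definition of $\nu_m$ and reindexing via $m = n-1$ turns this into $\alpha\, \mu_{n-1} = \mu_n\, \alpha^{(n)}$ for all $n \in \Nset$, which is the asserted identity, and the uniqueness of $\alpha$ is inherited directly from the uniqueness clause in the universal property.

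I do not expect any serious obstacle; the argument is a routine application of universality, and the real content of the lemma is that the hypothesis is engineered to be the cone condition in disguise. The single point that demands care is the bookkeeping of the index shift: the cone must be built from $\mu_{m+1}\, \alpha^{(m+1)}$ rather than from $\mu_m\, \alpha^{(m)}$, and it is exactly this shift that converts the compatibility hypothesis into the required cone equations. Getting that indexing right is the only place where a careless reader could stumble.
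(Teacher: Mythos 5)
Your proposal is correct and is essentially identical to the paper's own proof: you define the cone $\nu_m := \mu_{m+1}\,\alpha^{(m+1)}$ (the paper calls it $\mu'_m$), verify the compatibility $\nu_m\, i_m = \nu_{m-1}$ from the hypothesis, and obtain $\alpha$ together with its uniqueness from the universal property of the $\omega$-colimit. The index bookkeeping you flag is handled the same way in the paper, so there is nothing further to add.
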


\begin{proof}
If we define $\mu^\prime_n := \mu_{n+1}\,\alpha^{(n+1)}$ then
\[
\mu^\prime_n\,i_n = \mu_{n+1}\,\alpha^{(n+1)}\,i_n = \mu_n \,\alpha^{(n)} = \mu^\prime_{n-1},
\]
and we get $\alpha$ from the universal property ($\alpha \mu_{n-1} = \mu^\prime_{n-1}$).
\end{proof}

\begin{definition} \label{def:adapted}
A morphism $\alpha: F_\infty \rightarrow F_\infty$ given as in Lemma \ref{lem:adapted} is called an adapted endomorphism (with respect to the filtration) determined by $(\alpha^{(n)})_{n \in \Nset}$.
\end{definition}

Intuitively $\alpha^{(n)}$ describes how $\alpha$ acts on (the image of) the $(n-1)$-th object in the filtration and adaptedness describes the compatibility of these actions. The terminology is motivated by stochastic processes and their time evolutions, cf. Section \ref{section:spread}. 

\begin{lemma} \label{lem:trivial}
Let $\alpha$ be an adapted endomorphism (with respect to a filtration). If $\alpha \mu_n = \mu_n$ for some $n$ then also
$\alpha \mu_k = \mu_k$ for all $k \le n$.
\end{lemma}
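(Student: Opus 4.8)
The plan is to argue by downward induction on the index, reducing everything to a single descent step: it suffices to show that $\alpha\,\mu_{n-1} = \mu_{n-1}$ follows from $\alpha\,\mu_n = \mu_n$. Granting this one step, I iterate it starting from the given $n$ and descend through $n-1, n-2, \ldots$, reaching every $k \le n$ in finitely many steps.

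For the descent step itself, the key ingredient is the cone relation $\mu_n\, i_n = \mu_{n-1}$ defining the universal cone of the inductive limit. I would simply compute, using associativity of composition and then the hypothesis $\alpha\,\mu_n = \mu_n$:
\[
\alpha\,\mu_{n-1} = \alpha\,\mu_n\, i_n = \mu_n\, i_n = \mu_{n-1}.
\]
This closes the induction and yields $\alpha\,\mu_k = \mu_k$ for all $k \le n$. (Equivalently, one can avoid the induction altogether by observing that $\mu_k = \mu_n\, i_n\, i_{n-1}\cdots i_{k+1}$ for $k \le n$, so that $\alpha\,\mu_k = \alpha\,\mu_n\,(i_n\cdots i_{k+1}) = \mu_n\,(i_n\cdots i_{k+1}) = \mu_k$ directly.)

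I expect no genuine obstacle here, since the statement is essentially a formal consequence of the cone structure. It is worth remarking that the adaptedness of $\alpha$ plays no role in the argument: only the fact that $\alpha$ is an endomorphism of $F_\infty$ fixing $\mu_n$, together with the defining relations $\mu_n\, i_n = \mu_{n-1}$, is used. The one point requiring care is the direction of propagation: the hypothesis at level $n$ forces the conclusion at all \emph{smaller} indices, because every $\mu_k$ with $k \le n$ factors through $\mu_n$ via the connecting morphisms $i_{k+1}, \ldots, i_n$, whereas nothing can be concluded at larger indices.
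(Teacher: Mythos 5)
Your proof is correct, and it is actually more direct than the one in the paper. The paper's proof of this lemma routes through the adaptedness structure: it writes $\alpha\,\mu_{n-1} = \mu_n\,\alpha^{(n)}$ (the defining relation of an adapted endomorphism), then uses the compatibility condition $\mu_n\,\alpha^{(n)} = \mu_{n+1}\,\alpha^{(n+1)}\,i_n$ from Lemma \ref{lem:adapted} to rewrite this as $\alpha\,\mu_n\,i_n$, and only then applies the hypothesis and the cone relation to conclude $\alpha\,\mu_n\,i_n = \mu_n\,i_n = \mu_{n-1}$. You reach the expression $\alpha\,\mu_n\,i_n$ in a single step by precomposing the hypothesis $\alpha\,\mu_n = \mu_n$ with $i_n$ and invoking only the cone relation $\mu_{n-1} = \mu_n\,i_n$; the two arguments therefore coincide in their final step but yours skips the detour. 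Your remark that adaptedness plays no role is accurate and is a genuine observation the paper's write-up obscures: the lemma holds for any endomorphism of $F_\infty$ fixing $\mu_n$, with adaptedness entering only insofar as it is the standing hypothesis under which such $\alpha$ are constructed. Your closing point about the direction of propagation (downward only, since $\mu_k$ factors through $\mu_n$ for $k \le n$ but not conversely) is also correct and matches the intent of the paper's iteration.
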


\begin{proof}
If $\alpha \mu_n = \mu_n$ for some $n$ then
\[
\alpha \mu_{n-1} = \mu_n \alpha^{(n)} 
= \mu_{n+1} \alpha^{(n+1)} i_n = \alpha \mu_n i_n
= \mu_n i_n = \mu_{n-1}\,.
\]
By iterating this argument we get the stated result.
\end{proof}

If $\alpha$ satisfies the condition of Lemma \ref{lem:trivial} we say that $\alpha$ acts trivially on (the image of) the $n$-th object. 

\begin{definition} \label{def:partial}
Let $(\alpha_k)_{k\in\Nset_0}$ be a sequence of adapted endomorphisms (with respect to a common filtration). If the sequence satisfies
\begin{itemize}
\item[(1)]
for each $k \in \Nset$ the endomorphism $\alpha_k$ acts trivially on (the image of) the $(k-1)$-th object,
\item[(2)]
$\alpha_j \alpha_i = \alpha_i \alpha_{j-1}$ if $i,j \in \Nset_0$
and $i<j$,
\end{itemize}
then we say that $(\alpha_k)_{k\in\Nset_0}$ is an SCO-system of partial shifts (for this filtration). 
\end{definition}

Note that if $(\alpha_k)_{k\in\Nset_0}$ is an SCO-system of partial shifts then for all $\ell \in \Nset$ the sequence $(\alpha_k)_{k \ge \ell}$ is also an SCO-system of partial shifts if everything is suitably relabeled ($k \mapsto k-\ell$).

\begin{proposition} \label{prop:partial}
Let $(\alpha_k)_{k \in \Nset_0}$ be an SCO-system of partial shifts. 
Then
\[
\alpha_k (\alpha_0)^N \mu_0 =
\left\{
\begin{array}{cc}
(\alpha_0)^N\; \mu_0 & \text{if}\;\; N < k \\ 
(\alpha_0)^{N+1} \mu_0 & \text{if}\;\; N \ge k \\
\end{array} 
\right.
\]
\end{proposition}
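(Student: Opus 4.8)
The plan is to exploit the cosimplicial relation (2) of Definition~\ref{def:partial} in the special form $\alpha_k \alpha_0 = \alpha_0 \alpha_{k-1}$, valid for every $k \in \Nset$ (take $i=0$, $j=k$). This relation lets me ``push'' the factor $\alpha_k$ to the right through the powers of $\alpha_0$ while decreasing its index by one at each step. Concretely, I would first prove by induction on $j$ the auxiliary identity
\[
\alpha_k (\alpha_0)^j = (\alpha_0)^j\, \alpha_{k-j} \qquad (0 \le j \le k),
\]
as an equality of endomorphisms of $F_\infty$. The base case $j=0$ is trivial, and the inductive step follows by writing $\alpha_k(\alpha_0)^{j+1} = \bigl(\alpha_k (\alpha_0)^j\bigr)\alpha_0 = (\alpha_0)^j \alpha_{k-j}\alpha_0$ and applying $\alpha_{k-j}\alpha_0 = \alpha_0\alpha_{k-j-1}$, which is legitimate precisely because $k-j \ge 1$ throughout the range $0 \le j \le k-1$.

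With this identity in hand I would split into the two cases of the statement. When $N \ge k$, specialising to $j=k$ gives $\alpha_k(\alpha_0)^k = (\alpha_0)^k \alpha_0 = (\alpha_0)^{k+1}$, whence
\[
\alpha_k (\alpha_0)^N = \alpha_k(\alpha_0)^k (\alpha_0)^{N-k} = (\alpha_0)^{k+1}(\alpha_0)^{N-k} = (\alpha_0)^{N+1}
\]
already as morphisms (here $N-k \ge 0$), so composing with $\mu_0$ yields the second line. When $N < k$ I instead specialise to $j = N$, which lies in the admissible range since $N < k$, obtaining $\alpha_k(\alpha_0)^N = (\alpha_0)^N \alpha_{k-N}$. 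Now $k-N \ge 1$, so by property (1) of Definition~\ref{def:partial} the endomorphism $\alpha_{k-N}$ acts trivially on the $(k-N-1)$-th object; since $k-N-1 \ge 0$, Lemma~\ref{lem:trivial} propagates this down to $\alpha_{k-N}\mu_0 = \mu_0$. Therefore $\alpha_k(\alpha_0)^N \mu_0 = (\alpha_0)^N \alpha_{k-N}\mu_0 = (\alpha_0)^N \mu_0$, which is the first line.

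The computation is essentially bookkeeping once the relation $\alpha_k\alpha_0 = \alpha_0\alpha_{k-1}$ is isolated, so I do not expect a serious conceptual obstacle; the points requiring care are the index ranges. One must check that the relation from (2) is invoked only when the two indices are genuinely distinct (this forces the condition $k-j \ge 1$ and explains why the pushing is carried out exactly $k$ times when $N \ge k$ and exactly $N$ times when $N < k$), and one must invoke Lemma~\ref{lem:trivial} correctly to transport the triviality of $\alpha_{k-N}$ from the $(k-N-1)$-th object all the way down to $\mu_0$ --- which is exactly where the hypothesis $N < k$ (equivalently $k-N-1 \ge 0$) is used. The boundary case $k=0$ falls under $N \ge k$ and needs no separate treatment.
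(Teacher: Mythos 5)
Your proposal is correct and follows essentially the same route as the paper: push $\alpha_k$ through the powers of $\alpha_0$ via property (2) (exactly $N$ times when $N<k$, yielding $(\alpha_0)^N\alpha_{k-N}\mu_0$ and then invoking property (1) with Lemma~\ref{lem:trivial}; exactly $k$ times when $N\ge k$, yielding $(\alpha_0)^k\alpha_0(\alpha_0)^{N-k}\mu_0$). The paper states these two computations without spelling out the induction; your version merely makes the index bookkeeping explicit.
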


\begin{proof}
If $N < k$ then, using properties (2) and (1) of partial shifts
\[
\alpha_k (\alpha_0)^N \mu_0 = (\alpha_0)^N \alpha_{k-N} \mu_0
= (\alpha_0)^N \mu_0\,.
\]
If $N \ge k$ then, using property (2) of partial shifts
\[
\alpha_k (\alpha_0)^N \mu_0 = (\alpha_0)^{k} \alpha_0 (\alpha_0)^{N-k} \mu_0 = (\alpha_0)^{N+1} \mu_0\,.
\]
\end{proof}

Proposition \ref{prop:partial} explains the terminology of partial shifts:
we regard $\alpha_0$ as a full shift while $\alpha_k$ for $k \ge 1$ 
acts trivially on an initial part and only shifts the remaining part. We explain the origin of this concept in the theory of spreadability further in Section \ref{section:spread}. Of course the property in Proposition \ref{prop:partial} also reminds us of the origin of coface operators from face maps, i.e., specific strictly increasing functions missing one point, mentioned in Section \ref{section:intro}, so we have come full circle. Note that Proposition \ref{prop:partial} applied to the relabeled SCO-systems (as mentioned above) gives additional relationships.

The following theorem, the main result of this section, gives a correspondence between SCOs and SCO-systems of partial shifts.
While we usually suppress the covariant functor $F$ corresponding to an SCO in the notation, in this argument we write it down to make the interplay with the inductive limit construction explicit. 

\begin{theoremanddefinition} \label{thm:correspondence}
(a) Let a covariant functor $F$ from the semi-simplicial category $\Delta_S$ to a category $\CC$ be given, with the corresponding SCO in $\CC$ described by $F[n],\; F(\delta^k),
\;k=0,\ldots,n$ and $n\in\Nset_0$. We can restrict to a functor from $\omega$ to $\CC$ (which we also denote by $F$), given by
\[
F[0] \stackrel{i_1}{\rightarrow} F[1] \stackrel{i_2}{\rightarrow} F[2] \stackrel{i_3}{\rightarrow} \ldots
\]
where $i_n := F(\delta^n): F[n-1] \rightarrow F[n]$ for $n \in \Nset$. If there exists an $\omega$-colimit
$F_\infty$ then on $F_\infty$ we obtain an SCO-system of partial shifts
$(\alpha_k)_{k \in \Nset_0}$, where the $\alpha_k$
are 
(for $n \in \Nset,\, k \in \Nset_0$) determined by
\[
\alpha_k^{(n)} :=
\left\{
\begin{array}{cc}
F(\delta^k): F[n-1] \rightarrow F[n] & \text{if}\;\; k=0,\ldots,n, \\ 
F(\delta^n): F[n-1] \rightarrow F[n] & \text{if}\;\; k>n\,.\;\quad\quad \\
\end{array} 
\right.
\]
We call this the SCO-system of partial shifts canonically associated to the SCO.

(b) Conversely, if $(\alpha_k)_{k \in \Nset_0}$ is an SCO-system of partial shifts for a filtration $(F_n)_{n\in\Nset_0}$ such that the $\mu_n \colon F_n \rightarrow F_\infty$ are monic
then defining $F[n] := F_n$ and 
\[
F(\delta^k) := \alpha_k^{(n)}: F[n-1] \rightarrow F[n] \quad \text{for} \; k=0,\ldots,n,\;n\in \Nset_0
\]
(where the $\alpha_k^{(n)}$ determine $\alpha_k$) yields an SCO and $(\alpha_k)_{k \in \Nset_0}$ is canonically associated to this SCO.
\end{theoremanddefinition}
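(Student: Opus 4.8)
The plan is to prove (a) and (b) separately, in each case translating statements about the endomorphisms $\alpha_k$ of $F_\infty$ into statements about the level morphisms $\alpha_k^{(n)}$, where the cosimplicial identities can be applied directly. Throughout I write $i_n = F(\delta^n)$, use the colimit relation $\mu_n i_n = \mu_{n-1}$, the defining relation $\alpha\,\mu_{n-1} = \mu_n\,\alpha^{(n)}$ of an adapted endomorphism, and the abbreviation $\alpha_k^{(n)} = F(\delta^{\min(k,n)})$ that the displayed formula amounts to.

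For (a) I would first check, for each fixed $k$, that the family $(\alpha_k^{(n)})_n$ satisfies the hypothesis $\mu_{n+1}\alpha_k^{(n+1)} i_n = \mu_n\alpha_k^{(n)}$ of Lemma \ref{lem:adapted}, so that $\alpha_k$ is a well-defined adapted endomorphism. This splits into two cases. If $k \le n$ then $\alpha_k^{(n+1)} i_n = F(\delta^k)F(\delta^n) = F(\delta^{n+1})F(\delta^k)$ by the cosimplicial identity $\delta^{n+1}\delta^k = \delta^k\delta^n$, and precomposing with $\mu_{n+1}$ together with $\mu_{n+1}i_{n+1} = \mu_n$ gives $\mu_n\alpha_k^{(n)}$. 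If $k \ge n+1$ then $\alpha_k^{(n+1)} = i_{n+1}$ and $\alpha_k^{(n)} = i_n$, so both sides reduce to $\mu_n i_n$ using the colimit relation twice. Property (1) is then immediate: at level $n = k$ we have $\alpha_k^{(k)} = F(\delta^k) = i_k$, whence $\alpha_k\mu_{k-1} = \mu_k\alpha_k^{(k)} = \mu_k i_k = \mu_{k-1}$, and Lemma \ref{lem:trivial} propagates triviality to all earlier objects.

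The substantive step is property (2). As the family $(\mu_n)$ is jointly epic (any two endomorphisms of $F_\infty$ agreeing after every $\mu_n$ coincide, by the uniqueness in the universal property), applying the adaptedness relation twice reduces $\alpha_j\alpha_i = \alpha_i\alpha_{j-1}$ to the level identity
\[
\alpha_j^{(m)}\,\alpha_i^{(m-1)} = \alpha_i^{(m)}\,\alpha_{j-1}^{(m-1)} \qquad (i<j)
\]
of morphisms $F[m-2]\to F[m]$. Writing each factor through $\min$ and using $\min(a,b)-1 = \min(a-1,b-1)$, this reads $F(\delta^{\min(j,m)})F(\delta^i) = F(\delta^i)F(\delta^{\min(j,m)-1})$ when $i<m$, which is a single cosimplicial identity since then $i < \min(j,m)$; and when $i\ge m$ (so also $j>m$) both sides collapse to $F(\delta^m)F(\delta^{m-1})$. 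This bookkeeping, checking that the truncation by $\min$ is compatible with the index shift $j\mapsto j-1$ in every boundary regime, is the main obstacle, but it is a finite, mechanical verification. Thus $(\alpha_k)$ is an SCO-system of partial shifts, and it is canonically associated by definition.

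For (b) the monic hypothesis does the work. Two applications of adaptedness give $\mu_n\,\alpha_j^{(n)}\alpha_i^{(n-1)} = \alpha_j\alpha_i\,\mu_{n-2}$ and $\mu_n\,\alpha_i^{(n)}\alpha_{j-1}^{(n-1)} = \alpha_i\alpha_{j-1}\,\mu_{n-2}$; these right-hand sides agree by property (2), so cancelling the monic $\mu_n$ yields the cosimplicial identities for the morphisms $F(\delta^k) := \alpha_k^{(n)}$, and we have an SCO. It remains to see that $(\alpha_k)$ is the system canonically associated to this SCO. Here I would first recover the filtration: property (1) at $k=n$ gives $\mu_n\alpha_n^{(n)} = \alpha_n\mu_{n-1} = \mu_{n-1} = \mu_n i_n$, so $\alpha_n^{(n)} = i_n$ by monicity, meaning the restriction to $\omega$ in (a) reproduces the original filtration and hence the same colimit $(F_\infty,\mu_n)$. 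Finally, for $k>n$, Lemma \ref{lem:trivial} (applicable since $n-1\le k-1$) gives $\alpha_k\mu_{n-1} = \mu_{n-1}$, whence $\alpha_k^{(n)} = i_n = \alpha_n^{(n)}$ by monicity, which is exactly the clipped formula of (a). Therefore the partial shifts constructed in (a) have the same defining families as the given ones and coincide by the uniqueness in Lemma \ref{lem:adapted}. The only subtlety in (b) is that monicity is needed not merely for the cosimplicial identities but also to pin down $\alpha_k^{(n)}$ for $k>n$, which is what makes the two constructions mutually inverse.
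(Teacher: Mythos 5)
Your proposal is correct and follows essentially the same route as the paper's proof: establish the level identities $\alpha_j^{(m)}\alpha_i^{(m-1)}=\alpha_i^{(m)}\alpha_{j-1}^{(m-1)}$ by case analysis on where the indices sit relative to the truncation, lift them to $F_\infty$ via adaptedness and the uniqueness clause of the universal property (your ``jointly epic'' remark is exactly the paper's $\mu'_{n-1}$ argument), and in (b) cancel the monic $\mu_n$ to descend. The only differences are cosmetic — you verify adaptedness before the level identities rather than after, organize the cases via $\min(k,n)$, and make explicit the identity $\alpha_n^{(n)}=i_n$ recovering the filtration in (b), which the paper leaves implicit.
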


\begin{proof}
In (a) it follows that for all $i,j \in \Nset_0$ with $i < j$
\[
\alpha_j^{(n+1)} \alpha_i^{(n)} 
= \alpha_i^{(n+1)} \alpha_{j-1}^{(n)}.
\]
In fact,
\begin{eqnarray*} 
\text{if}\; i \le n \;\text{and}\; j \le n+1: &\quad&
\alpha_j^{(n+1)} \alpha_i^{(n)} = F(\delta^j) F(\delta^i) 
= F(\delta^i) F(\delta^{j-1}) = \alpha_i^{(n+1)} \alpha_{j-1}^{(n)}, \\
\text{if}\; i > n \;\text{and}\; j > n+1: &\quad&
\alpha_j^{(n+1)} \alpha_i^{(n)} = F(\delta^{n+1}) F(\delta^n) 
= \alpha_i^{(n+1)} \alpha_{j-1}^{(n)}, \\
\text{if}\; i \le n \;\text{and}\; j > n+1: &\quad&
\alpha_j^{(n+1)} \alpha_i^{(n)} = F(\delta^{n+1}) F(\delta^i) 
= F(\delta^i) F(\delta^n) = \alpha_i^{(n+1)} \alpha_{j-1}^{(n)}\,. \\
\end{eqnarray*}

If the filtration $F[0] \stackrel{i_1}{\rightarrow} F[1] \stackrel{i_2}{\rightarrow} \ldots$ 
with $i_n := F(\delta^n) = \alpha_n^{(n)}: F[n-1] \rightarrow F[n]$, for $n \in \Nset$,
yields an $\omega-$colimit $F_\infty$ 
with morphisms $\mu_n: F[n] \rightarrow F_\infty$, for all $n \in \Nset_0$, satisfying $\mu_{n+1} i_{n+1} = \mu_n$ then
\[
i_{n+1} \alpha_k^{(n)} = \alpha_{n+1}^{(n+1)} \alpha_k^{(n)} = \alpha_k^{(n+1)} \alpha_n^{(n)}
= \alpha_k^{(n+1)} i_n
\]
and by applying $\mu_{n+1}$ we obtain
\[
\mu_n \alpha_k^{(n)} = \mu_{n+1} i_{n+1} \alpha_k^{(n)} = \mu_{n+1} \alpha_k^{(n+1)} i_n\,.
\]
From Lemma \ref{lem:adapted} we get, for all $k \in \Nset_0$, an adapted endomorphism $\alpha_k: F_\infty \rightarrow F_\infty$. 
We verify the properties of an SCO-system of partial shifts. First, for $k \in \Nset_0$
\[
\alpha_{k+1} \mu_k = \mu_{k+1} \alpha^{(k+1)}_{k+1}
= \mu_{k+1} i_{k+1} = \mu_k,
\]
which is property (1). Second, for all $n \in \Nset$ and $i<j$
\[
\alpha_j \alpha_i \mu_{n-1} = \alpha_j \mu_n \alpha_i^{(n)}
= \mu_{n+1} \alpha_j^{(n+1)} \alpha_i^{(n)}
= \mu_{n+1} \alpha_i^{(n+1)} \alpha_{j-1}^{(n)}
= \ldots = \alpha_i \alpha_{j-1} \mu_{n-1}\,.
\]
This implies 
$\alpha_j \alpha_i = \alpha_i \alpha_{j-1}$ if $i<j$ which is property (2). In fact, for $n \in \Nset$ we can define
\[
\mu^\prime_{n-1} := \alpha_j \alpha_i \mu_{n-1} = \alpha_i \alpha_{j-1} \mu_{n-1}
\]
and verify that
\[
\mu^\prime_n i_n = \alpha_j \alpha_i \mu_n i_n = \alpha_j \alpha_i \mu_{n-1} = \mu^\prime_{n-1}\,.
\]
It follows from the universal property that there is a unique morphism $\beta$ such that
$\mu^\prime_{n-1} = \beta \mu_{n-1}$ for all $n \in \Nset$. So $\beta = \alpha_j \alpha_i$ but with a similar argument also
$\beta = \alpha_i \alpha_{j-1}$ which proves our claim. 

Starting with (b) we just reverse the argument above to get (for all $0 \le i < j \le n+1,\; n\in \Nset$)
\[
\mu_{n+1} \alpha_j^{(n+1)} \alpha_i^{(n)}
= \alpha_j \alpha_i \mu_{n-1}
= \alpha_i \alpha_{j-1} \mu_{n-1}
= \mu_{n+1} \alpha_i^{(n+1)} \alpha_{j-1}^{(n)}\,.
\]
By assumption the $\mu_n$ are monic and we get $\alpha_j^{(n+1)} \alpha_i^{(n)} = \alpha_i^{(n+1)} \alpha_{j-1}^{(n)}$, which gives the cosimplicial identities for 
$F(\delta^k) := \alpha_k^{(n)}: F[n-1] \rightarrow F[n],\;\;
k=0,\ldots,n,\;n\in \Nset$. Hence if we apply the construction in (a) to this SCO then we get an SCO-system of partial shifts with the same $\alpha^{(n)}_k, k=0,\ldots,n,\;n\in \Nset$, as for the original system. 
But the remaining $\alpha^{(n)}_k$ with $k>n,\;n\in \Nset$, are also the same as for the original system. In fact, from property (1) of partial shifts we find (for $k>n,\;n\in \Nset$)
\[
\mu_n \alpha^{(n)}_k = \alpha_k \mu_{n-1} = \mu_{n-1} = \mu_n i_n 
\] 
and because the $\mu_n$ are monic this implies $\alpha^{(n)}_k = i_n$. We conclude that the original
sequence $(\alpha_k)_{k \in \Nset_0}$ is canonically associated to the SCO constructed from it.
\end{proof}

\begin{remark}\normalfont
Note that an SCO-system of partial shifts $(\alpha_k)_{k\in\Nset_0}$ canonically associated to an SCO satisfies the stronger local property (i.e., implying adaptedness) that the following diagram is commutative for all $n \in \Nset$ and $0\le k \le n$:
\[
\xymatrix{
F[n-1] \ar[d]_{\alpha^{(n)}_k} \ar[r]^{i_n} &F[n]\ar[d]^{\alpha^{(n+1)}_k}\\
F[n] \ar[r]_{i_{n+1}} &F[n+1]}
\]
Of course, this local property is also satisfied by any SCO-system of partial shifts with filtration $F_0 \stackrel{i_1}{\rightarrow} F_1 \stackrel{i_2}{\rightarrow} \cdots$, where each $\mu_n: F_n \rightarrow F_\infty$ is monic. In this case we are allowed to switch freely between SCOs and SCO-systems of partial shifts.
\end{remark}

%Conceivably, there exists an SCO-system of partial shifts which does not give rise to an SCO via the construction in Theorem 2.5 (b) -- in this case each associated $\mu_n : F_n \into F_\infty$ is not monic.  On the other hand, there exists an SCO that gives rise to an $\omega$-colimit $F_\infty$ of the filtration $F[0] \stackrel{i_1}{\into} F[1] \stackrel{i_2}{\into} \cdots$ where $\mu_n: F[n] \into F_\infty$ is not necessarily monic for some $n$.

In fact the length of the proof of Theorem \ref{thm:correspondence} above should not distract the reader from the fact that in all examples in this paper the correspondence is nothing but a rather direct assembling of all $\delta^k$ (for fixed $k$ and between different objects) into a single morphism $\alpha_k$ on the inductive limit. While in homological algebra SCOs are the natural starting point, in probability theory it is a common practice to study phenomena by constructing a big universe, i.e., a probability space common to all variables, and so it may be the SCO-system of partial shifts which first comes into view. This was indeed the case in the theory of (noncommutative) spreadability to which we apply our results in Section \ref{section:spread}. 
\\

Before developing some substantive connections with actions of the braid group and noncommutative probability in subsequent sections, we first give some examples by direct construction. 

\begin{example}\normalfont
It is worth noting that in the category of sets to each system of mappings $\alpha_k : X \rightarrow X, \; k\in\Nset_0$, satisfying $\alpha_j\alpha_i=\alpha_i\alpha_{j-1}$ for $i<j$, there is a canonical choice of filtration for which $(\alpha_k)_{k\in\Nset_0}$ is an SCO-system of partial shifts, as follows.

  Let $X$ be a set and for each $k \in\Nset_0$ let $\alpha_k : X \rightarrow X$ be a mapping. Furthermore, suppose that $\alpha_j\alpha_i = \alpha_i \alpha_{j-1}$ for $i,j \in \Nset_0,\, i<j$. For each $n \in\Nset_0$, let $X_n:=\{ x \in X : \alpha_{n+1}(x)=x \}$ (the fixed point set of $\alpha_{n+1}$) and let $i_n: X_{n-1} \rightarrow X_{n}$ be the inclusion mapping (which is well-defined since for $x \in X_{n-1}$ we have $\alpha_{n+1}(x)=\alpha_{n+1}\alpha_{n}(x) = \alpha_{n}\alpha_{n}(x)=x$). We will assume that $X$ is equal to the inductive limit $X_\infty:=\bigcup_{n\in\Nset_0} X_n$ (if it is not then we simply replace $X$ by $X_\infty$, after noting that $\alpha_n (X_\infty) \subset X_\infty$ for all $n\in\Nset_0$).  We claim that $(\alpha_k)_{k \in \Nset_0}$ is an SCO-system of partial shifts for the filtration $X_0 \stackrel{i_1}{\rightarrow} X_1 \stackrel{i_2}{\rightarrow} \cdots$.  First we see that each $\alpha_k$ is adapted by defining the mapping $\alpha^{(n)}_k : X_{n-1} \rightarrow X_n$ by $\alpha^{(n)}_k := (\alpha_k)|_{X_{n-1}}$ for all $n \in \Nset$. We see that condition (1) of Definition \ref{def:partial} is satisfied trivially and condition (2) of Definition \ref{def:partial} is given by assumption.
	
By easy modifications of these arguments such a canonical filtration based on fixed points can be obtained in many other categories, for example in the categories of noncommutative probability spaces considered in Section \ref{section:spread}.	
\end{example}

\begin{example}\label{example:groups}\normalfont
 For each $n \in \Nset_0$, let $G_n$ be a subgroup of the general linear group
$GL(n+1,R)$ over a unital ring $R$, such that 
$i_{n+1}(G_n) \subset G_{n+1}$. Here 
$i_{n+1}$ is the canonical embedding of $GL(n+1,R)$ in $GL(n+2,R)$, i.e., $i_{n+1}(g)=\begin{pmatrix}
      g & 0 \\ 0 & 1 \end{pmatrix}$
 for all $g \in GL(n+1,R)$, where each of the two zeros denotes a column or a row of $n+1$ zeros. We view $\Sset_{n+1}$ as the group of permutations on $\{0,1,\ldots,n\}$ and let $c_k$ denote the cycle $(k \;\; k+1 \;\; \cdots \;\; n)$. Let 
$\pi_{n+1}$ be the action of $\Sset_{n+1}$ on $GL(n+1,R)$ given by conjugation by permutation matrices, and suppose that $G_n$ is invariant under this action, for all $n \in \Nset_0$.  

      We can construct an SCO, $F$ say, in the category of groups by defining $F[n]:=G_n$ and $F(\delta^k):F[n-1] \rightarrow F[n]$ by $F(\delta^k)=\pi_{n+1}(c_k) i_n$ for $0\le k \le n$. [This means inserting a $k$-th row and column with a $1$ at the intersection and $0$s elsewhere. The cosimplicial identities are easy to check from that.]
			Through this construction we see, in particular, that $(GL(n,\Cset))_n,\;(U(n,\Cset)))_n,\;(SU(n,\Cset))_n$ and $(\Sset_n)_n$ are semi-cosimplicial groups, and it follows from Theorem 2.5 that we get SCO-systems of partial shifts on their inductive limits $GL(\infty,\Cset), U(\infty,\Cset),\; SU(\infty,\Cset)$ and $\Sset_\infty$, respectively.

    %  This example can easily be generalised to the setting of a filtration of groups \[ G_0 \stackrel{i_1}{\into} G_1 \stackrel{i_2}{\into} \cdots \] and for each $n \in \NN_0$  an action $\pi_{n}$ of $S_n$ on $G_n$ satisfying $i_{n+1}\pi_n(c_k)i_n = \pi_{n+1}(c_k)i_{n+1}i_n$ and $\pi_{n+1}((n-1 \;\; n))i_{n+1}i_n=i_{n+1}i_n$ for all $n \in \NN_0$ and $0\le k \le n$.
      
For the symmetric groups $(\Sset_n)_{n\in\Nset}$ let us express the structure as a semi-cosimpli\-cial group in a more direct way. 
In this case we have $F[n] = G_n = \Sset_{n+1}$ for
$n \in \Nset_0$. We think of $F[0] = G_0 = \Sset_1$ as the trivial group while for $n \ge 1$ we have Coxeter generators
$\sigma_N := (N-1\; N)$ or star generators $\gamma_N := (0\; N)$, for $N=1,\ldots,n$ in both cases. 
Then we can check that $F(\delta^0) \sigma_N = \sigma_{N+1}$ for all $N$ while for $k \ge 1$
      \[
      F(\delta^k) (\gamma_N) = \begin{cases} \gamma_N & \text{if } N<k, \\ \gamma_{N+1} & \text{if } N \ge k. \end{cases}
      \]
The formula for $k \ge 1$ can be considered as an instance of Proposition \ref{prop:partial} for the relabeling $k \mapsto k-1$. 
\end{example}

Actually these examples of semi-cosimplicial groups belong to a general scheme of producing SCOs which we develop in its full generality in the following section.

\section{Semi-Cosimplicial Objects from Actions of the Braid Monoid $\Bset^+_\infty$}
\label{section:braid}

The braid groups $\Bset_n$ were introduced by Artin in \cite{Ar25}, see \cite{KT08} for a recent overview.  
For $n \ge 2$, $\;\Bset_n$ is presented by $n-1$ 
generators $\sigma_1,\ldots,\sigma_{n-1}$ satisfying the relations
\begin{align}
&&\sigma_i \sigma_{j} \sigma_i 
&= \sigma_{j} \sigma_i \sigma_{j} 
&\text{if $ \; \mid i-j \mid\, = 1 $;}&& \tag{B1}\label{eq:B1}\\
&&\sigma_i \sigma_j 
&= \sigma_j \sigma_i  
&\text{if $ \; \mid i-j \mid\, > 1 $.}&& \tag{B2}\label{eq:B2}
\end{align}
One has the inclusions $ \Bset_2 \subset \Bset_3 \subset \cdots \subset 
\Bset_{\infty}$, where  $\Bset_{\infty}$ denotes the inductive limit. 
The Artin generator $\sigma_i$ will be
presented as a geometric braid as follows:
%%%%%%%%%%%%%%%%%%%%%%%%%%%%%%%%%%%%%%%%%%%%%%%%%%%%%%%%%%%%%%%%%%%%%%%%%%%
%%%%%%%%%%%%%%%%%   FIGURE 0a -- BEGIN  %%%%%%%%%%%%%%%%%%%%%%%%%%%%%%%%%%%
%%%%%%%%%%%%%%%%%%%%%%%%%%%%%%%%%%%%%%%%%%%%%%%%%%%%%%%%%%%%%%%%%%%%%%%%%%%
\begin{figure}[h]
\setlength{\unitlength}{0.3mm}
\begin{picture}(360,35)%(0,0) optional shift of picture
%\put(0,0){\line(0,1){40}} %reference coordinate system
%\put(0,0){\line(1,0){340}} %reference coordinate system 
%%%%%%%%%%%%%%%%%%%%%%%%%%%%%%%%%%%%%%%%%%%%%%%%%%%%%%%%%%%%
%%%%%%%%%%% Definition of partial pictures - BEGIN %%%%%%%%%
%%%%%%%%%%%%%%%%%%%%%%%%%%%%%%%%%%%%%%%%%%%%%%%%%%%%%%%%%%%%
\savebox{\artin}(20,20)[1]{\masterartin} 
\savebox{\artininv}(20,20)[1]{\masterartininv} 
\savebox{\strandr}(20,20)[1]{\masterstrandr} 
\savebox{\strandl}(20,20)[1]{\masterstrandl} 
\savebox{\horizontaldots}(20,20)[1]{\masterhorizontaldots}
%%%%%%%%%%%%%%%%%%%%%%%%%%%%%%%%%%%%%%%%%%%%%%%%%%%%%%%%%%%%
%%%%%%%%%%% Definition of partial pictures - END   %%%%%%%%%
%%%%%%%%%%%%%%%%%%%%%%%%%%%%%%%%%%%%%%%%%%%%%%%%%%%%%%%%%%%%
%%%%%%%%% the grid is 20 by 20 for partial pictures %%%%%%%%
%%%%%%%%%%%%%%%%%%%%%%%%%%%%%%%%%%%%%%%%%%%%%%%%%%%%%%%%%%%%
\put(100,00){\usebox{\strandl}}  
\put(100,00){\usebox{\strandr}}
\put(120,00){\usebox{\horizontaldots}}
\put(160,0){\usebox{\strandl}}
\put(180,0){\usebox{\artin}}
\put(200,0){\usebox{\strandr}}
\put(220,0){\usebox{\horizontaldots}}
%%%%%%%%%%%%%%%%%%%%%%%%%%%%%%%%
\put(99,25){\footnotesize{$0$}}
\put(119,25){\footnotesize{$1$}}
\put(172,25){\footnotesize{$i-1$}}
\put(200,25){\footnotesize{$i$}}
%%%%%%%%%%%%%%%%%%%%%%%%%%%%%%%%
%%%%%%%%%%%%%%%%%%%%%%%%%%%%%%%%
%\put(200,00){\usebox{\strandl}}  
%\put(200,00){\usebox{\strandr}}
%\put(220,00){\usebox{\horizontaldots}}
%\put(260,00){\usebox{\strandl}}
%\put(280,00){\usebox{\artininv}}
%\put(300,00){\usebox{\strandr}}
%\put(320,00){\usebox{\horizontaldots}}
%%%%%%%%%%%%%%%%%%%%%%%%%%%%%%%%
%\put(199,25){\footnotesize{$0$}}
%\put(219,25){\footnotesize{$1$}}
%\put(272,25){\footnotesize{$i-1$}}
%\put(300,25){\footnotesize{$i$}}
%%%%%%%%%%%%%%%%%%%%%%%%%%%%%%%%
\end{picture}
\end{figure}
%%%%%%%%%%%%%%%%%%%%%%%%%%%%%%%%%%%%%%%%%%%%%%%%%%%%%%%%%%%%%%%%%%%%%%%%%%%
%%%%%%%%%%%%%%%%%   FIGURE 0a -- END   %%%%%%%%%%%%%%%%%%%%%%%%%%%%%%%%%%%%
%%%%%%%%%%%%%%%%%%%%%%%%%%%%%%%%%%%%%%%%%%%%%%%%%%%%%%%%%%%%%%%%%%%%%%%%%%%

Our convention in drawing diagrams of braids is that reading formulas from left to right corresponds to top-down compositions in the diagram.

It turns out that for the following arguments we do not need inverses of the Artin generators. Hence we consider $\Bset^+_\infty$, the monoid generated by $(\sigma_i)_{i \in \Nset}$.

Suppose that $\Bset^+_\infty$ acts on a set $X$, we simply write $gx \in X$ for the result of $g \in \Bset^+_\infty$ acting on $x \in X$. We define for $n \in \Zset, n \ge -1$
\[
X^n := \{ x \in X \colon \sigma_k\, x = x \;\,\text{if}\;\, k \ge n+2 \}
\]
which gives an increasing sequence $X^{-1} \subset X^0 \subset X^1 \subset \ldots$ of subsets of the set $X$.

\begin{theorem} \label{thm:braid}
$(X^n)_{n \ge -1}$ is an augmented semi-cosimplicial set (an augmented SCO in the category of sets), with the coface operators $\delta^k$ given by
\begin{eqnarray*}
\delta^k \colon \quad X^{n-1} &\rightarrow& X^n \quad\quad (k=0,\ldots,n,\quad n \in \Nset_0) \\
x &\mapsto& \sigma_{k+1} \ldots \sigma_{n+1} \,x \;.
\end{eqnarray*}
\end{theorem}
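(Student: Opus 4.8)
The plan is to check two things: that each $\delta^k$ genuinely maps $X^{n-1}$ into $X^n$, and that the coface operators obey the cosimplicial identities $\delta^j\delta^i=\delta^i\delta^{j-1}$ for $i<j$. Throughout I would work with the $\Bset^+_\infty$-action applied to a fixed element, so that every assertion becomes an equality $w\,x=w'\,x$ of (images of) positive braid words, to be deduced from the relations \eqref{eq:B1}, \eqref{eq:B2} together with the defining property of $X^{n-1}$, namely $\sigma_m x=x$ for all $m\ge n+1$. It is convenient to write $[a,b]:=\sigma_a\sigma_{a+1}\cdots\sigma_b$ for $a\le b$ (and $[a,b]:=\id$ for $a>b$), so that $\delta^k$ is the action of $[k+1,n+1]$.

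Well-definedness is the easy half. For $x\in X^{n-1}$ the last generator in $[k+1,n+1]x$ acts trivially, so $\delta^k x=[k+1,n]x$; and for any $m\ge n+2$ the generator $\sigma_m$ has index at least two above every index in $[k+1,n]$, hence by \eqref{eq:B2} commutes past that word and then fixes $x$. Thus $\sigma_m(\delta^k x)=\delta^k x$ for all $m\ge n+2$, i.e. $\delta^k x\in X^n$. The inclusions $X^{-1}\subset X^0\subset\cdots$ and the augmented structure (the case $n=0$, with source $X^{-1}$) are then immediate, since the construction and the identities below make sense and hold equally for $n=0$.

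For the cosimplicial identities, written out the claim is $[j+1,n+2]\,[i+1,n+1]\,x=[i+1,n+2]\,[j,n+1]\,x$ for $0\le i<j\le n+1$. First I would strip a common, commuting prefix: since $[i+1,n+1]=[i+1,j-1]\,[j,n+1]$ and $[i+1,n+2]=[i+1,j-1]\,[j,n+2]$, and since $[i+1,j-1]$ (indices $\le j-1$) commutes by \eqref{eq:B2} with $[j+1,n+2]$ (indices $\ge j+1$), both sides acquire the same left factor $[i+1,j-1]$ and the identity reduces to $[j+1,n+2]\,[j,n+1]\,x=[j,n+2]\,[j,n+1]\,x$. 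Writing $[j,n+2]=\sigma_j[j+1,n+2]$, this says precisely that $z:=[j+1,n+2]\,[j,n+1]\,x$ is fixed by $\sigma_j$. Using $\sigma_{n+1}x=x$ and $\sigma_{n+2}x=x$ together with \eqref{eq:B2} to delete the two top generators, one simplifies $z=B_j\,x$, where $B_j:=[j+1,n+1]\,[j,n]$.

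The crux is then the inductive statement: $\sigma_j$ fixes $B_j x$ for every $1\le j\le n+1$. By \eqref{eq:B2} one has the recursion $B_j=\sigma_{j+1}\sigma_j\,B_{j+1}$ (move the leading $\sigma_j$ of $[j,n]$ left past $[j+2,n+1]$). I would prove the statement by downward induction on $j$: the base case $j=n+1$ is $B_{n+1}=\id$ with $\sigma_{n+1}x=x$, and for the step one computes, using the braid relation \eqref{eq:B1},
\[
\sigma_j B_j x=\sigma_j\sigma_{j+1}\sigma_j\,B_{j+1}x=\sigma_{j+1}\sigma_j\sigma_{j+1}\,B_{j+1}x=\sigma_{j+1}\sigma_j\,B_{j+1}x=B_j x,
\]
the penultimate equality being the induction hypothesis that $\sigma_{j+1}$ fixes $B_{j+1}x$. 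This single braid-relation cascade, fed by the fixed-point conditions of $X^{n-1}$ that make the top generators vanish and start the induction, is the only substantive point; everything else is index bookkeeping with \eqref{eq:B2}. The main obstacle, then, is purely organizational: arranging the splittings so that the commutations of \eqref{eq:B2} and the one non-trivial move from \eqref{eq:B1} land in exactly the right positions.
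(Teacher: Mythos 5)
Your proof is correct, and it is organized differently from the paper's. The paper proves the cosimplicial identities by a double induction: first a downward induction on $i$ with $j=n+1$ fixed (using $\delta^{i-1}=\sigma_i\delta^i$), and then a downward induction on $j$ carried out by an explicit chain of braid-word manipulations. You instead strip the common prefix $\sigma_{i+1}\cdots\sigma_{j-1}$, which commutes past $\sigma_{j+1}\cdots\sigma_{n+2}$ by \eqref{eq:B2}, thereby eliminating the dependence on $i$ entirely and reducing the whole family of identities to the single statement that $\sigma_j$ fixes $B_jx$ with $B_j=(\sigma_{j+1}\cdots\sigma_{n+1})(\sigma_j\cdots\sigma_n)$; the recursion $B_j=\sigma_{j+1}\sigma_jB_{j+1}$ then makes one downward induction on $j$ suffice, with a single application of \eqref{eq:B1} per step. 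I checked the reduction (note that no injectivity of the action is needed, since you only apply the common prefix to both sides of an already-established equality), the simplification $z=B_jx$ using $\sigma_{n+1}x=\sigma_{n+2}x=x$, the recursion, and the induction, and all are sound. Your route is closer in spirit to the ``alternative proof'' the paper sketches after the main proof --- the braid equality $(\sigma_{j+1}\cdots\sigma_{n+1})(\sigma_{i+1}\cdots\sigma_{n+1})\sigma_{n+1}=(\sigma_{i+1}\cdots\sigma_{n+1})(\sigma_j\cdots\sigma_{n+1})$ illustrated by the diagram --- but differs in that you feed the fixed-point conditions into the induction rather than verifying a pure identity in $\Bset^+_\infty$. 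What your version buys is a visibly smaller inductive core (one induction instead of two, with the $i$-dependence handled by a one-line commutation) ; what the paper's main proof buys is that every step is phrased directly in terms of the operators $\delta^k$, so the cosimplicial identities are manipulated in the form in which they are later used. A further small point in your favor: you explicitly verify that $\delta^k$ maps $X^{n-1}$ into $X^n$, which the paper's proof leaves implicit.
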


Note that $\sigma_{n+1} x = x$ for $x \in X^{n-1}$, so if $x \in X^{n-1}$ then for $k<n$ we can also write
$\delta^k \, x = \sigma_{k+1} \ldots \sigma_n \,x$ and for $k=n$ we have $\delta^n\,x = x$. Hence
$\delta^n: X^{n-1} \rightarrow X^n$ is nothing but the inclusion map, in particular this applies to the augmentation $\delta^0: X^{-1} \rightarrow X^0$. 

\begin{proof}
We use a double induction argument to prove
\[
\delta^j \delta^i = \delta^i \delta^{j-1} \colon X^{n-1} \rightarrow X^{n+1}
\]
for all $n \in \Nset_0$ and $i=0,\ldots,n,\;j=1,\ldots,n+1$ such that $i<j$. 
\\
We fix $n \in \Nset_0$. First suppose that $j=n+1$. If $i=n$ then for $x \in X^{n-1}$
\[
\delta^j \delta^i x = \delta^{n+1} \delta^n x = x = \delta^n \delta^n x = \delta^i \delta^{j-1} x\,.
\]
If, for all $x \in X^{n-1}$, the equation $\delta^j \delta^i x = \delta^i \delta^{j-1} x$ is valid for $j=n+1$
and for some $i$ with $1 \le i \le n$ then 
\[
\delta^{i-1} \delta^{j-1} x = \sigma_i \delta^i \delta^{j-1} x
\]
\[
= \sigma_i \delta^j \delta^i x = \delta^j \sigma_i \delta^i x
= \delta^j \delta^{i-1} x \,.
\]
We conclude by induction that for all $x \in X^{n-1}$ and $j=n+1$
the equation $\delta^j \delta^i x= \delta^i \delta^{j-1} x$ is valid for all $0 \le i \le n$. 
\\
Now suppose that for all $x \in X^{n-1}$ and some $j$ with $2 \le j \le n+1$ we have $\delta^j \delta^i x= \delta^i \delta^{j-1} x$ for all 
$0 \le i < j$. Then for $i < j-1$ 
\begin{eqnarray*}
\delta^{j-1} \delta^i x &=& \sigma_j \delta^j \delta^i x \;=\; \sigma_j \delta^i \delta^{j-1} x \\
&=& \sigma_j \sigma_{i+1} \ldots \sigma_{j-1} \sigma_j \sigma_{j+1} \ldots \sigma_{n+1} \sigma_j \ldots \sigma_{n+1} x \\
&=& \sigma_{i+1} \ldots \sigma_j \sigma_{j-1} \sigma_j \sigma_{j+1} \ldots \sigma_{n+1} \sigma_j \ldots \sigma_{n+1} x \\
&=& \sigma_{i+1} \ldots \sigma_{j-1} \sigma_j \sigma_{j-1} \sigma_{j+1} \ldots \sigma_{n+1} \sigma_j \ldots \sigma_{n+1} x \\
&=& \sigma_{i+1} \ldots \sigma_{n+1} \sigma_{j-1} \sigma_j \ldots \sigma_{n+1} x \;=\; \delta^i \delta^{j-2} x \,.
\end{eqnarray*}
(Here $\ldots$ always stands for $\sigma$'s with subscripts increasing by steps of $1$, including the case that we have the same $\sigma$ to the left and to the right of $\ldots\;$) By an induction argument for $j$ this proves the theorem.
\end{proof}

We remark that the theorem and the proof is still valid if we replace
the $X^n$ by any subsets $\tilde{X}^n \subset X^n$ so that
$\delta^k (\tilde{X}^{n-1}) \subset \tilde{X}^n$ is always satisfied.
\\

An alternative proof can be based on checking the following braid equalities:
\[
(\sigma_{j+1} \ldots \sigma_{n+1}) (\sigma_{i+1} \ldots \sigma_{n+1}) \sigma_{n+1}
= (\sigma_{i+1} \ldots \sigma_{n+1}) (\sigma_j \ldots \sigma_{n+1})
\] 
(for $0 \le i<j \le n$) in $\Bset^+_\infty$ (illustrated in the following diagram), together with
$\sigma_{n+1}\, x = x$ for $x \in X^{n-1}$.
\\
%%%%%%%%%%%%%%%%%%%%%%%%%%%%%%%%%%%%%%%%%%%%%%%%%%%%%%%%%%%%%%%%%%%%%%%%%%%
%%%%%%%%%%%%%%%%%   FIGURE 1 -- BEGIN  %%%%%%%%%%%%%%%%%%%%%%%%%%%%%%%%%%%%
%%%%%%%%%%%%%%%%%%%%%%%%%%%%%%%%%%%%%%%%%%%%%%%%%%%%%%%%%%%%%%%%%%%%%%%%%%%
\begin{figure}[h]
\setlength{\unitlength}{0.2mm}
\begin{picture}(410,210)%(0,0) optional shift of picture
%\put(0,0){\line(0,1){100}} %reference coordinate system
%\put(0,0){\line(1,0){120}} %reference coordinate system 
%%%%%%%%%%%%%%%%%%%%%%%%%%%%%%%%%%%%%%%%%%%%%%%%%%%%%%%%%%%%
%%%%%%%%%%% Definition of partial pictures - BEGIN %%%%%%%%%
%%%%%%%%%%%%%%%%%%%%%%%%%%%%%%%%%%%%%%%%%%%%%%%%%%%%%%%%%%%%
\savebox{\artin}(20,20)[1]{\masterartin} 
\savebox{\artininv}(20,20)[1]{\masterartininv} 
\savebox{\strandr}(20,20)[1]{\masterstrandr} 
\savebox{\strandl}(20,20)[1]{\masterstrandl} 
\savebox{\horizontaldots}(20,20)[1]{\masterhorizontaldots}
%%%%%%%%%%%%%%%%%%%%%%%%%%%%%%%%%%%%%%%%%%%%%%%%%%%%%%%%%%%%
%%%%%%%%%%% Definition of partial pictures - END   %%%%%%%%%
%%%%%%%%%%%%%%%%%%%%%%%%%%%%%%%%%%%%%%%%%%%%%%%%%%%%%%%%%%%%
%%%%%%%%% the grid is 20 by 20 for partial pictures %%%%%%%%
\put(17,210){\footnotesize{$i$}}
\put(77,210){\footnotesize{$j$}}
\put(117,210){\footnotesize{$n$}}
\put(220,90){$=$}
\put(317,210){\footnotesize{$i$}}
\put(377,210){\footnotesize{$j$}}
\put(417,210){\footnotesize{$n$}}
%%%%%%%%%%%%%%%%%%%%%%%%%%%%%%%%%%%%%%%%%%%%%%%%%%%%%%%%%%%%
\put(0,180){\usebox{\strandr}}
\put(20,180){\usebox{\strandr}}
\put(40,180){\usebox{\strandr}}
\put(60,180){\usebox{\strandl}}
\put(80,180){\usebox{\artin}}
\put(100,180){\usebox{\strandr}}
\put(120,180){\usebox{\strandr}}
%%%%%%%%%%%%%%%%%%%%%%%%%%%%%%%%
%%%%%%%%%%%%%%%%%%%%%%%%%%%%%%%%
\put(0,160){\usebox{\strandr}}
\put(20,160){\usebox{\strandr}}
\put(40,160){\usebox{\strandr}}
\put(60,160){\usebox{\strandr}}
\put(80,160){\usebox{\strandl}}
\put(100,160){\usebox{\artin}}
\put(120,160){\usebox{\strandr}}
%%%%%%%%%%%%%%%%%%%%%%%%%%%%%%%
\put(0,140){\usebox{\strandr}}
\put(20,140){\usebox{\strandr}}
\put(40,140){\usebox{\strandr}}
\put(60,140){\usebox{\strandr}}
\put(80,140){\usebox{\strandr}}
\put(100,140){\usebox{\strandl}}
\put(120,140){\usebox{\artin}}
%%%%%%%%%%%%%%%%%%%%%%%%%%%%%%%
\put(20,120){\usebox{\artin}}
\put(40,120){\usebox{\strandr}}
\put(60,120){\usebox{\strandr}}
\put(80,120){\usebox{\strandr}}
\put(100,120){\usebox{\strandr}}
\put(120,120){\usebox{\strandr}}
%%%%%%%%%%%%%%%%%%%%%%%%%%%%%%%%
\put(0,100){\usebox{\strandr}}
\put(20,100){\usebox{\strandl}}
\put(40,100){\usebox{\artin}}
\put(60,100){\usebox{\strandr}}
\put(80,100){\usebox{\strandr}}
\put(100,100){\usebox{\strandr}}
\put(120,100){\usebox{\strandr}}
%%%%%%%%%%%%%%%%%%%%%%%%%%%%%%%%
\put(0,80){\usebox{\strandr}}
\put(20,80){\usebox{\strandr}}
\put(40,80){\usebox{\strandl}}
\put(60,80){\usebox{\artin}}
\put(80,80){\usebox{\strandr}}
\put(100,80){\usebox{\strandr}}
\put(120,80){\usebox{\strandr}}
%%%%%%%%%%%%%%%%%%%%%%%%%%%%%%%%
\put(0,60){\usebox{\strandr}}
\put(20,60){\usebox{\strandr}}
\put(40,60){\usebox{\strandr}}
\put(60,60){\usebox{\strandl}}
\put(80,60){\usebox{\artin}}
\put(100,60){\usebox{\strandr}}
\put(120,60){\usebox{\strandr}}
%%%%%%%%%%%%%%%%%%%%%%%%%%%%%%%%
\put(0,40){\usebox{\strandr}}
\put(20,40){\usebox{\strandr}}
\put(40,40){\usebox{\strandr}}
\put(60,40){\usebox{\strandr}}
\put(80,40){\usebox{\strandl}}
\put(100,40){\usebox{\artin}}
\put(120,40){\usebox{\strandr}}
%%%%%%%%%%%%%%%%%%%%%%%%%%%%%%%%
\put(0,20){\usebox{\strandr}}
\put(20,20){\usebox{\strandr}}
\put(40,20){\usebox{\strandr}}
\put(60,20){\usebox{\strandr}}
\put(80,20){\usebox{\strandr}}
\put(100,20){\usebox{\strandl}}
\put(120,20){\usebox{\artin}}
%%%%%%%%%%%%%%%%%%%%%%%%%%%%%%%%
%%%%%%%%%%%%%%%%%%%%%%%%%%%%%%%%
\put(0,0){\usebox{\strandr}}
\put(20,0){\usebox{\strandr}}
\put(40,0){\usebox{\strandr}}
\put(60,0){\usebox{\strandr}}
\put(80,0){\usebox{\strandr}}
\put(100,0){\usebox{\strandl}}
\put(120,0){\usebox{\artin}}
%%%%%%%%%%%%%%%%%%%%%%%%%%%%%%%%
%%%%%%%%%%%%%%%%%%%%%%%%%%%%%%%%
%%%%%%%%%%%%%%%%%%%%%%%%%%%%%%%%
\put(320,180){\usebox{\artin}}
\put(340,180){\usebox{\strandr}}
\put(360,180){\usebox{\strandr}}
\put(380,180){\usebox{\strandr}}
\put(400,180){\usebox{\strandr}}
\put(420,180){\usebox{\strandr}}
%%%%%%%%%%%%%%%%%%%%%%%%%%%%%%%%
\put(320,160){\usebox{\strandl}}
\put(340,160){\usebox{\artin}}
\put(360,160){\usebox{\strandr}}
\put(380,160){\usebox{\strandr}}
\put(400,160){\usebox{\strandr}}
\put(420,160){\usebox{\strandr}}
%%%%%%%%%%%%%%%%%%%%%%%%%%%%%%%%
\put(320,140){\usebox{\strandl}}
\put(320,140){\usebox{\strandr}}
\put(340,140){\usebox{\strandl}}
\put(360,140){\usebox{\artin}}
\put(380,140){\usebox{\strandr}}
\put(400,140){\usebox{\strandr}}
\put(420,140){\usebox{\strandr}}
%%%%%%%%%%%%%%%%%%%%%%%%%%%%%%%%
\put(320,120){\usebox{\strandl}}
\put(320,120){\usebox{\strandr}}
\put(340,120){\usebox{\strandr}}
\put(360,120){\usebox{\strandl}}
\put(380,120){\usebox{\artin}}
\put(400,120){\usebox{\strandr}}
\put(420,120){\usebox{\strandr}}
%%%%%%%%%%%%%%%%%%%%%%%%%%%%%%%%
\put(320,100){\usebox{\strandl}}
\put(320,100){\usebox{\strandr}}
\put(340,100){\usebox{\strandr}}
\put(360,100){\usebox{\strandr}}
\put(380,100){\usebox{\strandl}}
\put(400,100){\usebox{\artin}}
\put(420,100){\usebox{\strandr}}
%%%%%%%%%%%%%%%%%%%%%%%%%%%%%%%%
\put(320,80){\usebox{\strandl}}
\put(320,80){\usebox{\strandr}}
\put(340,80){\usebox{\strandr}}
\put(360,80){\usebox{\strandr}}
\put(380,80){\usebox{\strandr}}
\put(400,80){\usebox{\strandl}}
\put(420,80){\usebox{\artin}}
%%%%%%%%%%%%%%%%%%%%%%%%%%%%%%%
\put(300,60){\usebox{\strandr}}
\put(320,60){\usebox{\strandr}}
\put(340,60){\usebox{\strandl}}
\put(360,60){\usebox{\artin}}
\put(380,60){\usebox{\strandr}}
\put(400,60){\usebox{\strandr}}
\put(420,60){\usebox{\strandr}}
%%%%%%%%%%%%%%%%%%%%%%%%%%%%%%%
\put(300,40){\usebox{\strandr}}
\put(320,40){\usebox{\strandr}}
\put(340,40){\usebox{\strandr}}
\put(360,40){\usebox{\strandl}}
\put(380,40){\usebox{\artin}}
\put(400,40){\usebox{\strandr}}
\put(420,40){\usebox{\strandr}}
%%%%%%%%%%%%%%%%%%%%%%%%%%%%%%%%
%%%%%%%%%%%%%%%%%%%%%%%%%%%%%%%%
\put(300,20){\usebox{\strandr}}
\put(320,20){\usebox{\strandr}}
\put(340,20){\usebox{\strandr}}
\put(360,20){\usebox{\strandr}}
\put(380,20){\usebox{\strandl}}
\put(400,20){\usebox{\artin}}
\put(420,20){\usebox{\strandr}}
%%%%%%%%%%%%%%%%%%%%%%%%%%%%%%%
\put(300,0){\usebox{\strandr}}
\put(320,0){\usebox{\strandr}}
\put(340,0){\usebox{\strandr}}
\put(360,0){\usebox{\strandr}}
\put(380,0){\usebox{\strandr}}
\put(400,0){\usebox{\strandl}}
\put(420,0){\usebox{\artin}}
%%%%%%%%%%%%%%%%%%%%%%%%%%%%%%%
%%%%%%%%%%%%%%%%%%%%%%%%%%%%%%%%
\end{picture}
\end{figure}
%%%%%%%%%%%%%%%%%%%%%%%%%%%%%%%%%%%%%%%%%%%%%%%%%%%%%%%%%%%%%%%%%%%%%%%%%%%
%%%%%%%%%%%%%%%%%   FIGURE 1 -- END    %%%%%%%%%%%%%%%%%%%%%%%%%%%%%%%%%%%%
%%%%%%%%%%%%%%%%%%%%%%%%%%%%%%%%%%%%%%%%%%%%%%%%%%%%%%%%%%%%%%%%%%%%%%%%%%%

Note by looking at the diagram that the $i$-strand and the $j$-strand are not entangled with the other strands (which are always above them), but they are entangled with each other.
\\

Combining Theorem \ref{thm:braid} with Theorem \ref{thm:correspondence}
provides us with an SCO-system of partial shifts $(\alpha_k)_{k \in \Nset_0}$ canonically associated to an action of $\Bset^+_\infty$.
For an application of Proposition \ref{prop:partial} to such a situation in Example \ref{example:TL} we provide the following simplified formulas for powers of these partial shifts.

\begin{lemma} \label{lem:braid-partial}
If $x \in X^n \subset X$ (with $n \in \Nset_0$) then
for all $N \ge 1$
\[
(\alpha_n)^N (x) = \sigma_{n+N} \ldots \sigma_{n+1} x\,.
\]
\end{lemma}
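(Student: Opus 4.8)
The plan is to translate the iteration of $\alpha_n$ into a composition of coface operators and then reduce everything to the cosimplicial identities underlying Theorem~\ref{thm:braid}.

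First I would make the action of $\alpha_n$ explicit. By the remark following Theorem~\ref{thm:braid} the connecting maps $i_m=F(\delta^m)$ are inclusions, so $X_\infty=\bigcup_m X^m$ and the cone maps $\mu_m$ are inclusions; Theorem~\ref{thm:correspondence} then gives $\alpha_n|_{X^{m-1}}=\alpha_n^{(m)}=F(\delta^n)$ for every $m\ge n$. Thus $\alpha_n$ sends any element of level at least $n$ to its image under the corresponding coface $\delta^n$, raising the level by one, and since $x\in X^n$ a one-line induction on $N$ would give
\[
(\alpha_n)^N(x)=\underbrace{\delta^n\,\delta^n\cdots\delta^n}_{N}(x)\in X^{n+N},
\]
where the $j$-th factor from the right is the coface $\delta^n\colon X^{n+j-1}\to X^{n+j}$.

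Next I would straighten this word using the cosimplicial identities. Rewriting $\delta^j\delta^i=\delta^i\delta^{j-1}$ (for $i<j$) as $\delta^n\delta^k=\delta^{k+1}\delta^n$ for all $k\ge n$, I would prove by induction on $N$ that
\[
\underbrace{\delta^n\cdots\delta^n}_{N}=\delta^{n+N-1}\,\delta^{n+N-2}\cdots\delta^{n+1}\,\delta^n\colon X^n\to X^{n+N}.
\]
In the inductive step the hypothesis puts $(\delta^n)^{N-1}$ into the staircase $\delta^{n+N-2}\cdots\delta^{n+1}\delta^n$, whose factors carry the distinct upper indices $n,n+1,\dots,n+N-2$; transporting the extra leading $\delta^n$ rightwards through them raises each index by one and deposits $\delta^n$ at the far right, which is precisely the staircase for $N$. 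Finally I would evaluate this staircase: each $\delta^{n+j}\colon X^{n+j}\to X^{n+j+1}$ is a top coface, hence (again by the remark after Theorem~\ref{thm:braid}) left multiplication by a single generator, $\delta^{n+j}(w)=\sigma_{n+j+1}w$, so the composition telescopes to $\sigma_{n+N}\sigma_{n+N-1}\cdots\sigma_{n+1}x$, as claimed.

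The genuine content sits in the straightening step: one must keep the upper indices straight while sliding $\delta^n$ across the word and check that the relation $\delta^n\delta^k=\delta^{k+1}\delta^n$ is only ever used with $k\ge n$, which holds because the staircase indices lie in $\{n,\dots,n+N-2\}$. The other two steps are bookkeeping. As an alternative I could induct on $N$ directly on the statement: with $y:=(\alpha_n)^{N-1}(x)=\sigma_{n+N-1}\cdots\sigma_{n+1}x\in X^{n+N-1}$ one has $\alpha_n(y)=\delta^n(y)=\sigma_{n+1}\cdots\sigma_{n+N}\,y$, and the braid relations together with $\sigma_k y=y$ for $k\ge n+N+1$ collapse this to $\sigma_{n+N}y$; this is the same manipulation viewed through the concrete braid picture rather than the abstract cosimplicial relations.
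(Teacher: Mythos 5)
Your proposal is correct, and your primary route is genuinely different from the paper's. The paper expands $(\alpha_n)^N(x)$ directly as the braid word $(\sigma_{n+1}\cdots\sigma_{n+N})(\sigma_{n+1}\cdots\sigma_{n+N-1})\cdots\sigma_{n+1}x$ and collapses it by an induction using the braid relations together with $\sigma_k x = x$ for $k\ge n+2$ --- in effect redoing a braid computation of the same flavour as the proof of Theorem~\ref{thm:braid}. You instead stay at the level of the SCO: you identify $(\alpha_n)^N$ on $X^n$ with the word $(\delta^n)^N$ in coface operators, straighten it into the staircase $\delta^{n+N-1}\cdots\delta^{n+1}\delta^n$ using only the already-established cosimplicial identity in the form $\delta^n\delta^k=\delta^{k+1}\delta^n$ for $k\ge n$ (and your index bookkeeping checks out: the indices swept over are exactly $n,\dots,n+N-2$, all $\ge n$, with the final step $\delta^n\delta^n=\delta^{n+1}\delta^n$), and only then evaluate each factor as a single generator. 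This buys a proof that recycles Theorem~\ref{thm:braid} rather than reproving a braid identity, and it isolates where the set-theoretic input $x\in X^{m-1}\Rightarrow\sigma_{m+1}x=x$ is used (namely in reducing $\delta^{m-1}\colon X^{m-1}\to X^m$ to $w\mapsto\sigma_m w$); the paper's computation is more self-contained but less structural. One small slip: the coface $\delta^{n+j}\colon X^{n+j}\to X^{n+j+1}$ acting as $w\mapsto\sigma_{n+j+1}w$ is not the \emph{top} coface of that pair of levels (the top one, $\delta^{n+j+1}$, is the inclusion); it is $\delta^{m-1}$ for $m=n+j+1$. The formula you use for it is nevertheless the correct one, so nothing breaks. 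Your closing ``alternative'' --- inducting on $N$ with $y=(\alpha_n)^{N-1}(x)$ and collapsing $\sigma_{n+1}\cdots\sigma_{n+N}\,y$ to $\sigma_{n+N}\,y$ --- is essentially the paper's own argument.
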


\begin{proof}
From Theorem \ref{thm:braid} we have
\[
(\alpha_n)^N (x) = (\sigma_{n+1} \ldots \sigma_{n+N}) (\sigma_{n+1} \ldots \sigma_{n+N-1}) \ldots \sigma_{n+1} x
\]
(which is understood to be $\sigma_{n+1} x$ if $N=1$). 
This simplifies as shown above, as can be seen with an induction proof using the braid relations together with $x \in X^n$.
\end{proof}

We are mainly interested in situations where we have a (left) $\Bset^+_\infty$-module $V$ in which case Theorem \ref{thm:braid}
yields (at least) an augmented semi-cosimplicial abelian group. We give examples in a probabilistic setting in Section \ref{section:spread}. But let us give a few direct applications of Theorem \ref{thm:braid} immediately.

\begin{example}\normalfont 
It follows from Theorem \ref{thm:braid} that the sequence
$(\Bset_n)_n$ is a semi-cosimplicial group with the conjugation action of braids on themselves. In this case 
we choose $X := \Bset_\infty$ and $X^n := \Bset_{n+1}$
for all $n \in \Nset_0$ (and we define $X^0 = \Bset_1$ 
to be the trivial group). In fact, as required in our definition of $X^n$, we have 
\[
\Bset_{n+1} = \{ x \in \Bset_\infty \colon \;\sigma_k x \sigma^{-1}_k = x \;\; \text{for all}\;\; k \ge n+2 \}, 
\]
see for example Proposition 4.12 in \cite{GK09} for a proof.
Then for $x \in X^{n-1} = \Bset_n$ (with $n \in \Nset$) we have
\[
\delta^k(x) \quad:=\quad \sigma_{k+1} \ldots  \sigma_{n+1}\, x \,\sigma^{-1}_{n+1} \ldots  \sigma^{-1}_{k+1}.
\]
From the braid relations we can check that $\delta^0 (\sigma_N) = \sigma_{N+1}$ for all $N$.
If we use the so-called square roots of free generators $\gamma_1,\ldots,\gamma_n$ as generators for
$\Bset_{n+1}$ which are defined by
\[ 
\gamma_N := (\sigma_1 \ldots  \sigma_{N-1}) \sigma_N (\sigma^{-1}_{N-1} \ldots  \sigma^{-1}_1)
\]
then we have for $k \ge 1$ a direct way of describing the coface operators by
      \[
      \delta^k (\gamma_N) = \begin{cases} \gamma_N & \text{if } N<k, \\ \gamma_{N+1} & \text{if } N \ge k. \end{cases}
      \]
The formula for $k \ge 1$ can be considered as an instance of Proposition \ref{prop:partial} for the relabeling $k \mapsto k-1$. 
Again this can be checked by direct computations using the braid relations. Alternatively a detailed study of the so-called square roots of free generators presentation of the braid groups and of the corresponding partial shifts can be found in section 4 of \cite{GK09}. It is not an accident that this looks very similar to our example \ref{example:groups} with the sequence of symmetric groups considered as a semi-cosimplicial group. In fact,
it is instructive to check that we can go from the braid groups example to the symmetric groups example via the natural quotient map.
More generally, because the braid groups have the symmetric groups as quotients, we can always produce examples of SCOs from Theorem \ref{thm:braid} by actions of symmetric groups (interpreting them as actions of braid groups). The semi-cosimplicial groups produced in Example \ref{example:groups} are all of this type. 
\end{example}

\begin{example}\normalfont
For another class of examples we can consider solutions of Yang-Baxter equations. For illustration we choose the most basic setting: If $Y$ is a set and $r$ is a function from $Y \times Y$ to itself then $r$ is called a set-theoretic solution of the Yang-Baxter equation if on
$Y \times Y \times Y$ it satisfies $r^{12}\, r^{23}\, r^{12} = r^{23}\, r^{12}\, r^{23}$ where the superscript indicates on which copies $r$ acts. See for example \cite{GC12} for a recent investigation into such solutions. Clearly this defines an action of $\Bset^+_\infty$ on an infinite cartesian product $X$ of copies of $Y$ where $\sigma_k$ is represented by $r^{k-1,k}$.  
\end{example}

\begin{remark}\label{rem:cohom}\normalfont
Let us finally mention here that on $\Bset^+_\infty$-modules
we obtain among other things a version of the standard semi-cosimplicial cohomology theory which is always defined for SCOs in a module category. In fact, for all $n \in \Nset_0$ the differential
\[
d^n := \sum^n_{k=0} (-1)^k \delta^k \colon \; V^{n-1} \rightarrow V^n
\]
satisfies $d^{n+1} d^n=0$ and gives rise to the cohomology groups
\[
H^n := ker (d^{n+1}) / im(d^n)\,.
\]
Let us do a few direct computations for the SCOs produced from Theorem \ref{thm:braid}. On $V^{-1}$ we have $d^0 = \delta^0: x \mapsto x$. Further $d^1 = \delta^0 - \delta^1: V^0 \rightarrow V^1$, hence $d^1 x = \sigma_1 x - x$. It follows that both $im(d^0)$ and $ker(d^1)$ are equal to the fixed point set
of $\sigma_1$, so $H^0$ is trivial. Further $d^2 = \delta^0 - \delta^1 + \delta^2: V^1 \rightarrow V^2$, so
$d^2 x = \sigma_1 \sigma_2 x - \sigma_2 x + x$ for $x \in V^1$ and we find
\[
H^1 = \{x \in V^1: (\sigma_2-\sigma_1 \sigma_2)x = x\}
/ \{x \in V^1: x = \sigma_1 y - y \;\text{for}\;y \in V^0\}\,.
\]
An interpretation of these cohomology groups is not known
to us but it may be interesting to investigate when these groups are nontrivial and if they can play a role in the study of braid group representations. 

We also remark that other connections between the simplicial category and braid groups are investigated in the literature,
see for example \cite{Wu10},
but it is not clear to us how these investigations are related to our results above. 
\end{remark}

\section{Semi-Cosimplicial Objects and Spreadability in Noncommutative Probability}
\label{section:spread}

In this section we develop our theory within various categories of noncommutative probability spaces. We start in a very general situation and then by specializing make contact with settings that have a genuine probabilistic interpretation, as discussed in Section \ref{section:intro}. We refer to \cite{NS06} for further motivation to study these categories.

First consider a category with objects $(\CA,\varphi)$, where $\CA$ is a unital associative algebra over $\Cset$ 
and $\varphi: \CA \rightarrow \Cset$
is a linear functional with $\varphi(\eins)=1$ (i.e., unital), 
and with morphisms $\alpha: (\CA,\varphi) \rightarrow (\CB,\psi)$, where $\alpha$ is an algebra homomorphism satisfying $\alpha(\eins)=\eins$ (i.e., unital) and $\psi \circ \alpha = \varphi$. We call this the {\it category of noncommutative probability spaces} (as in \cite{NS06}). We mention at this point that there is no particular difficulty to work out the following theory in a non-unital setting but for definiteness we decided to concentrate on this standard version.

Let $(\CA,\varphi)$ be a noncommutative probability space. If $\CB$ is a unital associative algebra and $\iota: \CB \rightarrow \CA$ a unital algebra homomorphism then we can think of it as a morphism $\iota: (\CB,\varphi_B) \rightarrow (\CA,\varphi)$ with $\varphi_B := \varphi \circ \iota$. This is called a (noncommutative) random variable.  A sequence $(\iota_N)^\infty_{N=0}$ of such random variables is called a (noncommutative) random process and any expression of the form $\varphi(\iota_{N_1}(b_1),
\ldots, \iota_{N_k}(b_k)),\:b_i \in \CB$, (repetitions allowed), is called a moment of the process. If the variables do not commute we cannot speak of a joint distribution in the classical sense but there is the following replacement for it.
Let $\CA^f := *^\infty_{N=0} \CB$ be the (unital) free product of infinitely many copies of $\CB$ (see for example \cite{DKW14} for further uses of this construction in noncommutative probability) and let $\lambda_N: \CB \rightarrow \CA^f$ denote the canonical unital homomorphisms arising from this construction. The universal property ensures that there exists a unique unital 
homomorphism $\pi: \CA^f \rightarrow \CA$ such that $\pi \circ \lambda_N = \iota_N$ for all $N \in \Nset_0$. The unital linear functional $\varphi^f$ on $\CA^f$ defined by $\varphi^f := \varphi \circ \pi$ is called the distribution of the random process. Another way to think of a distribution is as a collection of all moments.

\begin{remark}\label{rem:process}\normalfont
In the literature it is often a sequence of elements $(x_N)_{N\in\Nset_0}$ in a noncommutative probability space which is called a noncommutative random process. This is just a special case of our setting where $\CB$ has a single generator and its image under $\iota_N$ is called $x_N$. The more flexible setting chosen here allows us to include multi-variable processes without much additional effort. 
\end{remark}

Processes with the same distribution are called stochastically equivalent. So if we are satisfied with stochastically equivalent versions it is possible to restrict our attention to free products: the processes $(\iota_n)$ and $(\lambda_n)$ given above have the same distribution if we endow $\CA^f$ with the functional $\varphi^f$. 

\begin{definition} \label{def:spread}
A sequence $(\iota_N)_{N\in \Nset_0}$ of unital homomorphisms from $\CB$ to the noncommutative probability space $(\CA, \varphi)$ is called spreadable if its distribution is unchanged if we pass to a subsequence, i.e., if for all $N \in \Nset_0$ we replace $\iota_N$ by $\iota_{i(N)}$
such that $N_1 < N_2$ implies $i(N_1) < i(N_2)$. 
\end{definition}

Spreadability is a distributional symmetry (or invariance principle). It only depends on the distribution and a sequence $(\iota_N)_{N\in \Nset_0}$ is spreadable if and only if the corresponding canonical sequence
$(\lambda_N)_{N\in \Nset_0}$ is spreadable (where $\CA^f$ is equipped with the functional $\varphi^f$ defined above). As indicated above, an equivalent description can be given in terms of moments where the definition of spreadability reduces to a system of equalities for numbers. 

We are ready for the main theorem. Informally stated, any appearance of SCOs in the category of noncommutative probability spaces always induces spreadability and, conversely, for any spreadable sequence we can always find an SCO which reproduces its distribution. Hence SCOs can be interpreted as the fundamental algebraic structure underlying spreadability. 

\begin{theorem} \label{thm:spread}
\begin{enumerate}
\item[]
\item Let an SCO be given in the category of noncommutative probability spaces with filtration $(\CA_n, \varphi_n)_{n\in \Nset_0}$ and
inductive limit $(\CA_\infty, \varphi_\infty)$. 
\\
Let $\iota_0 := \mu_0 \colon \CA_0 \rightarrow \CA_\infty$ and $\iota_N := (\alpha_0)^N \iota_0$ for $N \in \Nset_0$. 
Then $(\iota_N)_{N\in \Nset_0}$ is spreadable. 
(Here $\alpha_0$ is what we called the full shift among the partial shifts associated to the SCO. See Section \ref{section:adapted}.)
\item 
Let $(\iota_N)_{N\in \Nset_0}$ be a sequence of unital homomorphisms from the unital algebra $\CB$ to the noncommutative probability space $(\CA, \varphi)$ and let $(\CA^f, \varphi^f)$ be the corresponding (unital) free product equipped with the distribution (as described above, with $\lambda_N,\;N\in \Nset_0$, denoting the canonical embeddings, etc.). 
Consider the following statements $(a), (b), (c)$:
\begin{itemize}
\item[(a)]
$(\iota_N)_{N\in \Nset_0}$ is spreadable.
\item[(b)]
Let $\CA^f_n$ be generated by $\lambda_0(\CB), \ldots, \lambda_n(\CB)$ (as a unital algebra), for all $n \in \Nset_0$. Suppose that
$(\CA^f_n, \varphi^f_n)_{n\in\Nset_0}$ is an SCO in the category of noncommutative probability spaces with coface operators given by
\\
$\delta^k: (\CA^f_{n-1}, \varphi^f_{n-1}) \rightarrow (\CA^f_n,\varphi^f_n)$, for $k=0,\ldots,n$  (with $\varphi^f_n$ the restriction of $\varphi^f$ to $\CA^f_n$), determined (for $b \in \CB$) by
\[
\lambda_N (b) \mapsto
\left\{
\begin{array}{cc}
\lambda_N (b) & \text{if}\;\; N < k \\ 
\lambda_{N+1} (b) & \text{if}\;\; N \ge k. \\
\end{array} 
\right.
\]
\item[(c)]
Let $\CA_n$ be generated by $\iota_0(\CB), \ldots, \iota_n(\CB)$ (as a unital algebra), for all $n \in \Nset_0$. Suppose that
$(\CA_n, \varphi_n)_{n\in\Nset_0}$ is an SCO in the category of noncommutative probability spaces with coface operators
\\
$\delta^k: (\CA_{n-1}, \varphi_{n-1}) \rightarrow (\CA_n,\varphi_n)$, for $k=0,\ldots,n$  (with $\varphi_n$ the restriction of $\varphi$ to $\CA_n$), determined (for $b \in \CB$) by
\[
\iota_N (b) \mapsto
\left\{
\begin{array}{cc}
\iota_N (b) & \text{if}\;\; N < k \\ 
\iota_{N+1} (b) & \text{if}\;\; N \ge k. \\
\end{array} 
\right.
\]
\end{itemize}
Then $(a) \Leftrightarrow (b) \Leftarrow (c)$. 

\end{enumerate}
\end{theorem}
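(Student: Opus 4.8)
The plan is to reduce everything to Proposition \ref{prop:partial} together with the observation that the partial shifts attached to an SCO in this category are functional-preserving homomorphisms of the inductive limit.

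For part (1), note first that each partial shift $\alpha_k$ is a morphism in the category of noncommutative probability spaces: it is produced by Lemma \ref{lem:adapted} from the coface operators $\alpha_k^{(n)} = F(\delta^k)$, which are such morphisms, so $\alpha_k$ is a unital homomorphism with $\varphi_\infty \circ \alpha_k = \varphi_\infty$; hence every composition $\alpha_{k_1}\cdots\alpha_{k_m}$ also preserves $\varphi_\infty$. By Proposition \ref{prop:partial}, $\alpha_k \iota_N = \iota_N$ for $N<k$ and $\alpha_k \iota_N = \iota_{N+1}$ for $N\ge k$, so $\alpha_k$ implements the elementary spreading that skips position $k$. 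Now fix a moment $\varphi_\infty(\iota_{N_1}(b_1)\cdots\iota_{N_r}(b_r))$ and a strictly increasing $i\colon\Nset_0\to\Nset_0$. I would choose a composition $\alpha=\alpha_{k_1}\cdots\alpha_{k_m}$ of elementary spreads whose action on the finitely many indices $N\le\max_j N_j$ coincides with $i$; such a factorisation into single-gap insertions always exists, this being exactly the fact that the coface maps generate the morphisms of $\Delta_S$. Since $\alpha$ is a unital homomorphism, $\alpha\bigl(\iota_{N_1}(b_1)\cdots\iota_{N_r}(b_r)\bigr)=\iota_{i(N_1)}(b_1)\cdots\iota_{i(N_r)}(b_r)$, and applying $\varphi_\infty\circ\alpha=\varphi_\infty$ equates the two moments. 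As spreadability is precisely the invariance of all moments under all such $i$, part (1) follows.

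For part (2), the implications $(b)\Rightarrow(a)$ and $(c)\Rightarrow(a)$ are instances of part (1). In each case the hypothesised SCO has associated partial shifts with $\alpha_0\lambda_N=\lambda_{N+1}$ (respectively $\alpha_0\iota_N=\iota_{N+1}$) and $\lambda_0=\mu_0$ (respectively $\iota_0=\mu_0$), so the relevant sequence equals $((\alpha_0)^N\mu_0)_N$ and is spreadable by part (1); for (b) one transfers spreadability from $(\lambda_N)$ to $(\iota_N)$ using that they share the distribution $\varphi^f$. For $(a)\Rightarrow(b)$ I would verify the three ingredients of an SCO for the proposed data on the free product filtration. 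Well-definedness of each $\delta^k$ as a unital homomorphism is automatic, because $\CA^f_{n-1}$ is itself the free product of the copies $\lambda_0(\CB),\ldots,\lambda_{n-1}(\CB)$, so prescribing $\delta^k$ on each free factor extends uniquely by the universal property with no relations to check. The cosimplicial identities $\delta^j\delta^i=\delta^i\delta^{j-1}$ $(i<j)$ reduce on the generators $\lambda_N(b)$ to the same index bookkeeping as for the face maps and hold unconditionally. Spreadability enters only through functional-invariance: for a monomial in $\CA^f_{n-1}$ the map $\delta^k$ relabels its indices by the strictly increasing map skipping $k$, so $\varphi^f_n\circ\delta^k=\varphi^f_{n-1}$ is exactly one of the moment-equalities supplied by spreadability.

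The step I expect to require the most care is the combinatorial realisation in part (1): one must confirm that an arbitrary strictly increasing $i$, restricted to a finite set of indices, decomposes as a composition of elementary gap-insertions $\alpha_k$, and that iterating Proposition \ref{prop:partial} correctly tracks the resulting index map. Everything else is either a direct appeal to part (1) or rests on the absence of relations in the free product. This last point also explains why $(b)$, and not $(c)$, is equivalent to $(a)$: in the free product there is never an obstruction to extending the prescribed index maps to homomorphisms, whereas the generated algebras $\CA_n$ in $(c)$ may satisfy relations that the spreading fails to respect, so spreadability alone cannot force $(c)$.
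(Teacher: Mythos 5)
Your proposal is correct and follows essentially the same route as the paper: part (1) via the index action of the partial shifts from Proposition \ref{prop:partial} together with $\varphi_\infty\circ\alpha_k=\varphi_\infty$, and part (2) via the universal property of the free product for well-definedness of the $\delta^k$, with $(b)\Rightarrow(a)$ and $(c)\Rightarrow(a)$ reduced to part (1). The only difference is presentational: where you appeal to the general fact that strictly increasing maps factor into single-gap insertions, the paper writes down an explicit composition of powers of partial shifts ($M_1:=N_1$, $M_r:=N_r+[i(N_{r-1})-N_{r-1}]$, and $\alpha^{i(N_R)-M_R}_{M_R}\cdots\alpha^{i(N_1)-M_1}_{M_1}(p)=q$), which is the careful combinatorial step you flagged.
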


\begin{proof}
We start by proving (1).
Note that in the category of noncommutative probability spaces we can form inductive limits and hence we can go from SCOs to SCO-systems of partial shifts (as established in Section \ref{section:adapted}) whenever convenient. (The same applies to the $*$-setting studied later.) 

Let $\varphi(q)$ be a moment of a subsequence $(\iota_{i(N)})_{N\in \Nset_0}$. We define $p$ to be the finite product obtained by replacing
each factor $\iota_{i(N)}$ in $q$ by $\iota_N$. Suppose that the subscripts $N$ appearing in this way are $N_1 < N_2 < \ldots < N_R$. Let $M_1 := N_1$ and for $2 \le r \le R$ define 
$M_r  := N_r + [i(N_{r-1}) - N_{r-1}]$. 
Then $N_r \le M_r \le i(N_r)$ and,
making use of the properties of partial shifts stated in Proposition \ref{prop:partial}, we can check that
\[
\alpha^{i(N_R)-M_R}_{M_R} \ldots \alpha^{i(N_1)-M_1}_{M_1} (p) = q
\]
In fact, applying the partial shifts successively replaces variables of $p$ with the corresponding variables 
in $q$. Because the partial shifts preserve the functional $\varphi$ the proof is complete.

Now we prove the equivalence of (2)(a) and (2)(b). 
The formula in (b) always determines an algebra homomorphism 
$\delta^k$ between the free products $\CA^f_{n-1}$ and $\CA^f_n$,
for $k=0,\ldots,n$ and $n \in \Nset$. It is easily checked that these
$\delta^k$ satisfy the cosimplicial identities. (So this is always an SCO in the category of algebras.) 

If $(\iota_N)_{N\in \Nset_0}$ is spreadable then $\varphi^f_n \circ \delta^k = \varphi^f_{n-1}$ because we can always consider the subsequence which misses the $k$-th position. Hence (a) implies (b).
Conversely, given (b), the morphism $\delta^k$
(for $0 \le k \le n-1$) maps any polynomial in $\lambda_0(b_0), \ldots, \lambda_{n-1}(b_{n-1}),\;b_i \in \CB$,
in $\CA^f_{n-1}$ into the corresponding polynomial in $\lambda_0(b_0), \ldots, \lambda_{k-1}(b_{k-1}), 
\lambda_{k+1}(b_k), \ldots, \lambda_n(b_{n-1})$ in $\CA^f_n$.
(Note that $\delta^n: \CA^f_{n-1} \rightarrow \CA^f_n$ is nothing but the embedding of $\CA^f_{n-1}$ into $\CA^f_n$. Compare with our construction of the inductive limit from an SCO in Section \ref{section:adapted}.)
If $i: \Nset_0 \rightarrow \Nset_0$ is any strictly increasing function then by an induction argument we can always find a composition of coface operators which sends a polynomial $p$ in $\lambda_{N_1}(b_1), \ldots, \lambda_{N_R}(b_R)$ into the corresponding polynomial $q$ in $\lambda_{i(N_1)}(b_1), \ldots, \lambda_{i(N_R)}(b_R)$. (We have seen an explicit formula for this, using partial shifts, in the proof of (1).) Hence (b) implies (a). 

Finally we find that the implication from (2)(c) to 2(a) is in fact a special case of (1): here we identify the $\CA_n$ with their image in the inductive limit and omit the morphisms $\mu_n$ which are nothing but embeddings of the $\CA_n$ into $\CA_\infty \subset \CA$.
\end{proof}

We can now use our construction of SCOs from representations of braid monoids in Section \ref{section:braid} to produce many examples of spreadable sequences. This includes exchangeability which comes from representations of the symmetric groups. As an example reconsider the tensor product presented in Section \ref{section:intro}. The general case of exchangeability is characterized from this point of view in \cite{GK09}, Theorem 1.9. We can also
start with the semi-cosimplicial groups constructed in Sections
\ref{section:adapted} and \ref{section:braid} and obtain
SCOs in the category of noncommutative probability spaces based on the corresponding group algebras. For $\Bset_\infty$ the group von Neumann algebra
is studied from this point of view in \cite{GK09}, section 5. We
postpone a study of  other groups to future work.

\begin{example}\label{example:TL}\normalfont 
Instead
let us illustrate our theory here with an interesting example of spreadable sequences from the theory of subfactors in von Neumann algebras. For this we follow \cite{GHJ89},
in particular \textsection 4.4 there, where more details can be found. Note that to get a better fit with our previous notation we use a different numbering of the tower as in \cite{GHJ89}.  

Let $\CN \subset \CM$ be an inclusion of finite factors with finite Jones index $\beta = [\CM:\CN]$. Then with
$\CM_{-1} := \CN,\; \CM_0 := \CM$, Jones' basic construction yields a
tower 
\[
\CM_{-1} \subset \CM_0 \subset \CM_1 \subset \ldots
\]
and its weak closure with respect to the Markov trace $\trace$ yields the finite factor $\CM_\infty$. (To interpret the finite factor $\CM_\infty$ as an inductive limit we need a category of von Neumann algebras and von Neumann algebraic noncommutative probability spaces but for the following arguments we can also stay in the category of unital associative algebras and linear functionals introduced above where the inductive limit is just the union of all $M_k$.)
The algebra $\CM_\infty$ is generated (as a von Neumann algebra) by $\CM$ together with a sequence
of orthogonal projections $(e_n)_{n \in \Nset}$, called the Temperley-Lieb projections, satisfying $e_n \in \CM_n$ and
\[
e_n e_{n \pm 1} e_n = \beta^{-1} e_n,\quad e_n e_m = e_m e_n \; \text{if} \; |n-m| \ge 2
\]
(for all $n,m$). Then with $\beta = 2 + q + q^{-1}$ and defining
\[
g_n := q e_n - (\eins - e_n)
\]
it turns out that the $g_n$ satisfy the braid relations. So this determines a representation $\Bset_\infty \ni \sigma \mapsto g \in \CM_\infty$ by invertible elements $g \in\CM_\infty$ which in particular maps the Artin generator $\sigma_n$ to $g_n$, for all $n$.
Hence we can define an action of $\Bset_\infty$ on $\CM_\infty$ by $\sigma x := g x g^{-1}$ for $\sigma \in \Bset_\infty$ and $x \in \CM_\infty$. Clearly the Markov trace is invariant for this action (because it is a trace). Further note that $\CM = \CM_0$ commutes with $e_n$ and hence with $g_n$ for $n \ge 2$. From Theorem \ref{thm:braid} (and
Theorem \ref{thm:correspondence}) we get an SCO (and an SCO-system of partial shifts) and hence from Theorem \ref{thm:spread} we obtain spreadable sequences. Explicitly, with Lemma \ref{lem:braid-partial}, we conclude that 
the sequence $(\iota_n)_{n\in \Nset_0}$ of noncommutative random variables
$\iota_n \colon \CM \rightarrow \CM_\infty$ given by $\iota_0 := \id$ and for $n \ge 1$
\[
\iota_n := Ad (g_n \ldots g_1) 
\]
is spreadable. In particular
for any $x \in \CM$
the sequence $(x_N)_{N\in \Nset_0}$ given by
\[
x_0 := x, \; x_1 := g_1 x g^{-1}_1, \ldots,\; x_N := g_N \ldots g_1 x g^{-1}_1 \ldots g^{-1}_N, \ldots
\]
is spreadable, always with respect to the Markov trace. 

With the following modification we can find further spreadable sequences. For any $m \in \Nset_0$ consider the $m$-shifted action of
$\Bset_\infty$ on $\CM_\infty$ determined by $\sigma_n \mapsto 
g_{n+m}$, for all $n \in \Nset$. It follows that
the sequence $(\iota_n)_{n\in \Nset_0}$ of noncommutative random variables
$\iota_n \colon \CM_m \rightarrow \CM_\infty$ given by $\iota_0 := \id$ and for $n \ge 1$
\[
\iota_n := Ad (g_{m+n} \ldots g_{m+1}) 
\]
is spreadable.
In particular for any $x \in \CM_m$  the sequence $(x_N)_{N\in \Nset_0}$ given by
\[
x_0 := x, \; x_1 := g_{m+1} x g^{-1}_{m+1}, \ldots,\; x_N := g_{m+N} \ldots g_{m+1} x g^{-1}_{m+1} \ldots g^{-1}_{m+N}, \ldots
\]
is spreadable, with respect to the Markov trace, and
for all $m \in \Nset$ we find a spreadable sequence of projections $(e_{m,N})_{N \in \Nset_0}$ given by
\[
e_{m,0} := e_m, \; e_{m,1} := g_{m+1} e_m g^{-1}_{m+1}, \ldots,\; e_{m,N} := g_{m+N} \ldots g_{m+1} e_m g^{-1}_{m+1} \ldots g^{-1}_{m+N}, \ldots 
\]
Note that in general the braid group representations are not unitary and hence the coface operators and partial shifts in these arguments are algebra homomorphisms but not necessarily $*$-homomorphisms. This implies that the projections $e_{m,N}$ may be non-orthogonal projections. We further comment on this at the end of the section. 
\end{example}

Let us now, in the final part of this paper, go to $*$-algebras and to the probabilistic setting where the notion of spreadability originally comes from. Again this general setting can also be found in \cite{NS06}. If in a noncommutative probability space $(\CA,\varphi)$ as before $\CA$ is a (unital) $*$-algebra and
$\varphi$ is a unital positive linear functional (i.e., a state, positive in the sense that $\varphi(a^*a) \ge 0$ for all $a \in \CA$) then we call $(\CA,\varphi)$
a noncommutative $*$-{\it probability space}. We get the corresponding category by requiring morphisms $\alpha \colon (\CA,\varphi) \rightarrow (\CB,\psi)$ to be unital $*$-homomorphisms such that $\psi \circ \alpha = \varphi$. 
There is no need to repeat the definitions of random variables, moments, distributions, spreadability: they remain the same but now refer
to the new category of noncommutative $*$-probability spaces. In practice this can make a big difference. For example,
in the situation of a random process specified by a sequence $(x_N)_{N\in\Nset_0}$ of elements in a noncommutative probability space, see Remark \ref{rem:process}, if we work in the category of noncommutative $*$-probability spaces we have to take into account not only the elements $x_N$ themselves but also their adjoints $x^*_N$. To make the difference clear we talk about $*$-moments, $*$-distributions and $*$-spreadability but the reader should be aware that in the literature exclusively working in this setting the latter is usually just called spreadability.

It can be checked immediately that we can transfer our previous arguments to the category of noncommutative $*$-probability spaces and in this way successfully deal with $*$-spreadability.
For convenience we repeat Theorem \ref{thm:spread} explicitly in the $*$-setting and add a useful simplification which is available for faithful states. Recall that a positive functional $\varphi$ is called faithful if $\varphi(a^*a)=0$ for $a\in \CA$ implies $a=0$. 

\begin{theorem} \label{thm:spread*}
\begin{enumerate}
\item[]
\item Let an SCO be given in the category of noncommutative $*$-probability spaces with filtration $(\CA_n, \varphi_n)_{n\in \Nset_0}$ and
inductive limit $(\CA_\infty, \varphi_\infty)$. 
\\
Let $\iota_0 := \mu_0 \colon \CA_0 \rightarrow \CA_\infty$ and $\iota_N := (\alpha_0)^N \iota_0$ for $N \in \Nset_0$. 
Then $(\iota_N)_{N\in \Nset_0}$ is $*$-spreadable. 
(Here $\alpha_0$ is what we called the full shift among the partial shifts associated to the SCO.)
\item 
Let $(\iota_N)_{N\in \Nset_0}$ be a sequence of unital $*$-homomorphisms from the unital $*$-algebra $\CB$ to the noncommutative $*$-probability space $(\CA, \varphi)$ and let $(\CA^f, \varphi^f)$ be the corresponding (unital) free product equipped with the $*$-distribution (with $\lambda_N,\;N\in \Nset_0$, denoting the canonical embeddings, etc.). 
Consider the following statements $(a), (b), (c)$:
\begin{itemize}
\item[(a)]
$(\iota_N)_{N\in \Nset_0}$ is $*$-spreadable.
\item[(b)]
Let $\CA^f_n$ be generated by $\lambda_0(\CB), \ldots, \lambda_n(\CB)$ (as a unital $*$-algebra), for all $n \in \Nset_0$. Suppose that
$(\CA^f_n, \varphi^f_n)_{n\in\Nset_0}$ is an SCO in the category of noncommutative $*$-probability spaces with coface operators given by
\\
$\delta^k: (\CA^f_{n-1}, \varphi^f_{n-1}) \rightarrow (\CA^f_n,\varphi^f_n)$, for $k=0,\ldots,n$  (with $\varphi^f_n$ the restriction of $\varphi^f$ to $\CA^f_n$), determined (for $b \in \CB$) by
\[
\lambda_N (b) \mapsto
\left\{
\begin{array}{cc}
\lambda_N (b) & \text{if}\;\; N < k \\ 
\lambda_{N+1} (b) & \text{if}\;\; N \ge k. \\
\end{array} 
\right.
\]
\item[(c)]
Let $\CA_n$ be generated by $\iota_0(\CB), \ldots, \iota_n(\CB)$ (as a unital $*$-algebra), for all $n \in \Nset_0$. Suppose that
$(\CA_n, \varphi_n)_{n\in\Nset_0}$ is an SCO in the category of noncommutative $*$-probability spaces with coface operators given by
\\
$\delta^k: (\CA_{n-1}, \varphi_{n-1}) \rightarrow (\CA_n,\varphi_n)$, for $k=0,\ldots,n$  (with $\varphi_n$ the restriction of $\varphi$ to $\CA_n$), determined (for $b \in \CB$) by
\[
\iota_N (b) \mapsto
\left\{
\begin{array}{cc}
\iota_N (b) & \text{if}\;\; N < k \\ 
\iota_{N+1} (b) & \text{if}\;\; N \ge k. \\
\end{array} 
\right.
\]
\end{itemize}
Then $(a) \Leftrightarrow (b) \Leftarrow (c)$. If $\varphi$ is faithful then also $(a) \Rightarrow (c)$.

\end{enumerate}
\end{theorem}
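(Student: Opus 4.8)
The plan is to leverage Theorem~\ref{thm:spread} as directly as possible, since the category of noncommutative $*$-probability spaces differs from the category of noncommutative probability spaces only in that objects carry a $*$-structure and a state and morphisms are required to be unital $*$-homomorphisms. For the chain $(a) \Leftrightarrow (b) \Leftarrow (c)$ I would rerun the proof of Theorem~\ref{thm:spread}(2) essentially verbatim, observing at each step that the maps $\delta^k$ built on the free products $\CA^f_n$ are not merely algebra homomorphisms but $*$-homomorphisms (the unital free product of $*$-algebras is a $*$-algebra, and the defining prescription $\lambda_N(b)\mapsto\lambda_{N'}(b)$ is compatible with the involution since it sends $\lambda_N(b^*)$ to $\lambda_{N'}(b^*)$), and that the equalities of moments in the definition of spreadability now read as equalities of $*$-moments. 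No positivity is used in that argument, so it transfers unchanged; likewise part~(1) transfers, using that the partial shifts $\alpha_k$ are now state-preserving $*$-endomorphisms.

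The genuinely new content is the implication $(a) \Rightarrow (c)$ under the hypothesis that $\varphi$ is faithful. Here the point is \emph{well-definedness}: the formula in~(c) prescribes $\delta^k$ only on the generators $\iota_N(\CB)$ of $\CA_n \subseteq \CA$, and because these generators satisfy relations inside $\CA$ (unlike the free generators $\lambda_N(\CB)$ of $\CA^f_n$) one must check that the prescription respects those relations. I would set this up through the canonical surjections $\pi_n := \pi|_{\CA^f_n} \colon \CA^f_n \to \CA_n$, which satisfy $\pi_n \circ \lambda_N = \iota_N$ and $\varphi_n \circ \pi_n = \varphi^f_n$, and aim to descend the free-product coface operator $\delta^k_f \colon \CA^f_{n-1} \to \CA^f_n$ from~(b) along these surjections, i.e.\ to produce $\delta^k \colon \CA_{n-1} \to \CA_n$ with $\delta^k \circ \pi_{n-1} = \pi_n \circ \delta^k_f$. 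This is possible precisely when $\ker \pi_{n-1} \subseteq \ker(\pi_n \circ \delta^k_f)$.

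The key step, and where faithfulness enters, is the identification of $\ker\pi_n$ with the GNS left-kernel of $\varphi^f_n$. For an arbitrary state one always has $\ker\pi_n \subseteq \{u \in \CA^f_n : \varphi^f_n(u^*u) = 0\}$, from $\varphi^f_n(u^*u) = \varphi(\pi_n(u)^* \pi_n(u))$; the reverse inclusion is exactly where I would use that $\varphi$ is faithful on $\CA$, since then $\varphi(\pi_n(u)^*\pi_n(u)) = 0$ forces $\pi_n(u) = 0$. Granting this, take $u \in \ker\pi_{n-1}$, so $\varphi^f_{n-1}(u^*u) = 0$. Because $\delta^k_f$ is a $*$-homomorphism and, by the already-established equivalence $(a)\Leftrightarrow(b)$, satisfies $\varphi^f_n \circ \delta^k_f = \varphi^f_{n-1}$, one computes
\[
\varphi^f_n\bigl(\delta^k_f(u)^* \delta^k_f(u)\bigr) = \varphi^f_n\bigl(\delta^k_f(u^*u)\bigr) = \varphi^f_{n-1}(u^*u) = 0,
\]
so $\delta^k_f(u) \in \ker\pi_n$ by faithfulness, which is the required inclusion. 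The descended map $\delta^k$ is then automatically a unital $*$-homomorphism; it preserves the functional ($\varphi_n \circ \delta^k = \varphi_{n-1}$ follows from $\varphi^f_n \circ \delta^k_f = \varphi^f_{n-1}$ together with surjectivity of $\pi_{n-1}$), and the cosimplicial identities descend from those for the $\delta^k_f$ because each $\pi_{n-1}$ is onto.

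The main obstacle I anticipate is organising this well-definedness argument cleanly rather than any hard computation: the heart of the matter is that spreadability controls the state on the free product, while faithfulness is exactly the bridge that upgrades \emph{``the state annihilates $\delta^k_f(u)^*\delta^k_f(u)$''} to \emph{``$\delta^k_f(u)$ lies in the relation ideal $\ker\pi_n$ of $\CA_n$''}, so that the prescription on generators extends consistently to all of $\CA_{n-1}$. Without faithfulness this bridge breaks, which is consistent with the fact that $(a) \Rightarrow (c)$ is asserted only in the faithful case.
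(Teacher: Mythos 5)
Your proposal is correct and follows essentially the same route as the paper: parts (1) and the chain $(a)\Leftrightarrow(b)\Leftarrow(c)$ by transporting the proof of Theorem~\ref{thm:spread} to the $*$-setting, and $(a)\Rightarrow(c)$ by using spreadability to get $\varphi\bigl((\delta^k p)^*(\delta^k p)\bigr)=\varphi(p^*p)$ and faithfulness to conclude that $p=0$ forces $\delta^k p=0$, i.e.\ well-definedness of the coface operators. Your packaging of this as descending the free-product coface operators along the surjections $\pi_n\colon \CA^f_n\to\CA_n$ (identifying $\ker\pi_n$ with the left kernel of $\varphi^f_n$) is just a more explicit rendering of the paper's ``well defined on polynomials'' argument, not a different proof.
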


\begin{proof}
All except the final statement about the faithful case can be proved by checking that the proof of Theorem \ref{thm:spread} can be adapted to the category of noncommutative $*$-probability spaces. Now assume that $\varphi$ is faithful and that $(\iota_N)_{N\in \Nset_0}$ is $*$-spreadable. If we try to define coface operators $\delta^k$ by the formulas in $(c)$ we note that these formulas guarantee the cosimplicial identities on the generators and hence on $\CA_{n-1}$ if we can extend these formulas to morphisms, necessarily in a unique way by the $*$-homomorphism property.
So we only need to check that extending the formulas given in $(c)$ for $\delta^k$ as morphisms is well defined. 
Indeed, if $p$ is any noncommutative polynomial in $\iota_0(b_0), \ldots, \iota_{n-1} (b_{n-1}),\;b_i \in \CB$, then by $*$-spreadability 
we find that $\varphi(\delta^k p) = \varphi(p)$ and
$\varphi(p^* p) = \varphi \big( (\delta^k p^*)(\delta^k p) \big)$, so $p =0$ implies $\delta^k p =0$ (because $\varphi$ is faithful). This shows that $\delta^k$ is well defined as a morphism in the category of $*$-probability spaces. 
\end{proof}

Note that all we need for the converse direction $(a) \Rightarrow (c)$ is the well-definedness of the morphisms $\delta^k$. The assumption of $\varphi$ being faithful is just a convenient sufficient condition to enforce that.

Let us finish by revisiting the spreadable sequences in towers of von Neumann algebras studied in Example \ref{example:TL}. This example shows that some care must be taken in distinguishing spreadability and $*$-spreadability.
As noted in \cite{GHJ89}, Example 4.2.10, in the case of small index $\beta \le 4$ the representations of $\Bset_\infty$ are unitary. This implies that the corresponding SCO-systems of partial shifts are given by $*$-endomorphisms and hence we are in the setting of Theorem \ref{thm:spread*}. We conclude that for small index these sequences are actually $*$-spreadable. Of course $*$-spreadability fits better into the category of von Neumann algebras as specific $*$-algebras and the whole theory of $*$-braidability developed in \cite{GK09} is now applicable in this situation.
It is less clear how to make good use of the spreadability in the case of big index $\beta > 4$, also proved above, when we cannot expect $*$-spreadability and the de Finetti type results of \cite{GK09,Ko10}, also discussed briefly in Section \ref{section:intro}, are no longer available. 

We refer back to the discussion in Section \ref{section:intro} for a wider view of  the importance of $*$-spreadability in (noncommutative) probability theory. A lot of further examples can be found in \cite{GK09}.

\end{document}